\documentclass[a4paper,11pt,english]{amsart}
\usepackage{amsmath}
\usepackage{amsfonts}
\usepackage{amssymb}
\usepackage{amsthm}
\usepackage[all]{xy}
\usepackage{geometry}
\usepackage{babel}
\linespread{1}

\newtheorem {thm}{Theorem}
\newtheorem* {thm*}{Theorem}

\newtheorem {cor}[thm]{Corollary}
\newtheorem* {cor*}{Corollary}
\newtheorem {lem}[thm]{Lemma}

\newtheorem {rem}[thm]{Remark}

\theoremstyle{definition}
\newtheorem {exa}[thm]{Example}
\newtheorem* {conj*}{Conjecture}

\DeclareMathOperator{\End}{End}

\DeclareMathOperator{\ord}{ord}
\DeclareMathOperator{\Gal}{Gal}

\DeclareMathOperator{\id}{id}
\DeclareMathOperator{\Spec}{Spec}
\DeclareMathOperator{\Frob}{Fr}

\DeclareMathOperator{\tors}{tors}
\DeclareMathOperator{\card}{\#}

\newcommand{\lord}{\ord_\ell}

\author{Antonella Perucca}
\title{On the reduction of points on abelian varieties and tori}
\date{}

\begin{document}
\maketitle

\begin{abstract}
Let $G$ be the product of an abelian variety and a torus defined over a number field $K$. Let $R_1,\ldots, R_n$ be points in $G(K)$. Let $\ell$ be a rational prime and let $a_1,\ldots, a_n$ be non-negative integers. Consider the set of primes $\mathfrak p$ of $K$ satisfying the following condition: the $\ell$-adic valuation of the order of $(R_i \bmod \mathfrak p)$ equals $a_i$ for every $i=1,\ldots,n$.
We show that this set has a natural density and we characterize the $n$-tuples $a_1,\ldots, a_n$ for which the density is positive.
More generally, we study the $\ell$-part of the reduction of the points.
\end{abstract}

\section{Introduction}

Let $G$ be the product of an abelian variety and a torus defined over a number field $K$. Let $\mathcal O$ be the ring of integers of $K$.
We reduce $G$ modulo $\mathfrak p$, where $\mathfrak p$ is a prime of $K$ (a non-zero prime ideal of $\mathcal O$). By fixing a model of $G$ over an open subscheme of $\Spec \mathcal O$, one can define the reduction $G_{\mathfrak p}$ of $G$ for all but finitely many primes $\mathfrak p$ of $K$.
We fix a point $R$ in $G(K)$ and consider its reduction $(R \bmod \mathfrak p)$, which is well-defined for all but finitely many primes $\mathfrak p$ of $K$ (the set of excluded primes depends on the point, unless the toric part of $G$ is trivial). We are interested in the set of values taken by the order of $(R \bmod \mathfrak p)$, by varying $\mathfrak p$.

If $R$ is a torsion point of order $n$ then the order of $(R \bmod \mathfrak p)$ equals $n$ for all but finitely many primes $\mathfrak p$ of $K$: the excluded primes are either of bad reduction or divide $n$ (bad reduction here means that the reduction is not defined on $R$ or that $G_{\mathfrak p}$ is not the product of an abelian variety and a torus). 

Now assume that $R$ has infinite order. Call $n_R$ the number of connected components of the smallest $K$-algebraic subgroup of $G$ containing $R$.
In \cite[Main Theorem]{Peruccaorder} we proved that $n_R$ is the greatest positive integer dividing the order of $(R \bmod \mathfrak p)$ for all but finitely many primes $\mathfrak p$ of $K$.

Let $\ell$ be a rational prime. We study the $\ell$-adic valuation of the order of $(R \bmod \mathfrak p)$. We write $\lord$ to indicate the $\ell$-adic valuation of the order.
Let $a$ be a non-negative integer and consider the following set:
$$\Gamma=\{\mathfrak p :\; \; \lord(R \bmod \mathfrak p)=a\}$$
We prove that $\Gamma$ is finite if $a< v_\ell(n_R)$ and it has a positive natural density if $a\geq v_\ell(n_R)$. See Corollary~\ref{onepoint}.

For several points we have the following result:

\begin{thm}\label{exmainl}
Let $K$ be a number field, let $I=\{1,\ldots, n\}$. For every $i \in I$, let $G_i$ be the product of an abelian variety and a torus defined over $K$ and let $R_i$ be a point in $G_i(K)$. Let $\ell$ be a rational prime. For every $i\in I$, let $a_{i}$ be a non-negative integer. Consider the following set of primes of $K$:
$$\Gamma=\{\mathfrak p :\; \forall i\in I\;\; \lord(R_i \bmod \mathfrak p)=a_{i}\}$$
The set $\Gamma$ is either finite or it has a positive natural density.

Write $G=\prod_{i=1}^n G_i$ and $R=(R_1,\ldots, R_n)$. Let $G_R$ be the smallest $K$-algebraic subgroup of $G$ containing $R$ and call $G^1_R$ the connected component of $G_R$ containing $R$.

The set $\Gamma$ is infinite if and only if the following condition is satisfied: there exists a torsion point $T=(T_{1},\ldots, T_{n})$ in $G^1_R(\bar{K})$ such that $\lord T_{i}=a_{i}$ for every $i \in I$.
\end{thm}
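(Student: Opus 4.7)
The plan is to translate the arithmetic condition defining $\Gamma$ into a Chebotarev statement on a finite $\ell$-adic Kummer extension of $K$, and then identify the resulting set of Frobenius elements with the existence of the torsion point $T$.

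For all but finitely many $\mathfrak p$---those of good reduction with residue characteristic different from $\ell$---the reduction map induces a $\Frob_\mathfrak p$-equivariant isomorphism between $G[\ell^\infty](\bar K)$ and $G_\mathfrak p[\ell^\infty](\bar k_\mathfrak p)$, under which the $\ell$-primary part of $G_\mathfrak p(k_\mathfrak p)$ corresponds to the $\Frob_\mathfrak p$-fixed subgroup. From this I would deduce that for any $k>\max_i a_i$, the inequality $\lord(R_i\bmod\mathfrak p)\le a_i$ is equivalent to the existence of $Q_i'\in G_i(\bar K)$ with $\ell^k Q_i'=\ell^{a_i}R_i$ and $\Frob_\mathfrak p(Q_i')=Q_i'$; the sharp equality $\lord=a_i$ additionally requires the failure of this property with $a_i-1$ in place of $a_i$ (when $a_i\ge 1$). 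Fixing once and for all $\ell^k$-division points $Q_i\in G_i(\bar K)$ of $R_i$, every such $Q_i'$ differs from $\ell^{a_i}Q_i$ by an element of $G_i[\ell^k]$, so the condition becomes a conjugation-stable condition on $\Frob_\mathfrak p|_L$ within $\Gal(L/K)$, where $L:=K(G[\ell^k],Q_1,\ldots,Q_n)$, cutting out a subset $S\subset\Gal(L/K)$. Chebotarev's density theorem then gives $\dens(\Gamma)=|S|/[L:K]$ up to a finite exceptional set, yielding the claimed dichotomy.

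It remains to show $S\ne\emptyset$ if and only if a torsion $T\in G^1_R(\bar K)$ with $\lord T_i=a_i$ exists. The forward direction proceeds by picking any $\sigma\in S$, choosing $\sigma$-fixed lifts $Q_i'$ as above, and setting $T_i:=R_i-\ell^{k-a_i}Q_i'$; a direct computation shows that each $T_i$ is $\ell^{a_i}$-torsion of exact $\ell$-order $\ell^{a_i}$, using the failure-of-$(a_i-1)$ part of the definition of $S$. That $T=(T_i)$ lies in $G^1_R(\bar K)$ requires tracking the component structure of $G_R$: it reduces to showing that the $\sigma$-fixed collection of lifts lands, modulo $\ell$-power torsion, in the correct coset of $G^0_R$, and this follows from the $\Gal(\bar K/K)$-stability of $G_R$ together with the divisibility of $G^0_R(\bar K)$.

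The converse---constructing $\sigma\in S$ from a given $T$---is the main obstacle of the proof. It reduces to a largeness statement of Bashmakov--Ribet type for the Kummer cocycle $c\colon\Gal(L/K(G[\ell^k]))\to\prod_iG_i[\ell^k]$, $\sigma\mapsto(\sigma(Q_i)-Q_i)_i$: specifically, the image of $c$ must equal $G^1_R[\ell^k]$ viewed inside $\prod_iG_i[\ell^k]$. The inclusion of the image inside $G^1_R[\ell^k]$ is what makes $G^1_R$ (rather than $G$ or $G_R$) appear in the characterization, while the reverse inclusion is a nontrivial largeness statement and the technical heart of the argument, likely obtained by extending the algebraic-subgroup analysis of \cite{Peruccaorder} to the $\ell$-adic Kummer setting.
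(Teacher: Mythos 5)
Your reduction to a single Chebotarev condition on the fixed finite extension $L=K(G[\ell^k],Q_1,\ldots,Q_n)$ does not work, and this breaks both halves of the argument. The claimed equivalence --- that for a fixed $k>\max_i a_i$ the condition $\lord(R_i\bmod\mathfrak p)\le a_i$ is equivalent to the existence of a $\Frob_{\mathfrak p}$-fixed $Q_i'$ with $\ell^kQ_i'=\ell^{a_i}R_i$ --- is false in the direction you need. The existence of such a $Q_i'$ only says that $(\ell^{a_i}R_i\bmod\mathfrak p)$ has an $\ell^{k-a_i}$-th root in $G_{i,\mathfrak p}(k_{\mathfrak p})$, which forces $\lord(\ell^{a_i}R_i\bmod\mathfrak p)\le v_\ell\bigl(\card G_{i,\mathfrak p}(k_{\mathfrak p})\bigr)-(k-a_i)$; since $v_\ell(\card G_{i,\mathfrak p}(k_{\mathfrak p}))$ is unbounded as $\mathfrak p$ varies, this does not give $\lord(\ell^{a_i}R_i\bmod\mathfrak p)=0$. (Already for $G=\Gm{\mathbb Q}$, $R=2$, $\ell=2$: the density of primes for which $2$ is a $2^{k}$-th power residue does not equal the density of primes for which $\ord(2\bmod p)$ is odd, namely $1/3$.) The correct statement is that $\Gamma$ is essentially the intersection over \emph{all} $n$ of the Chebotarev sets $B_n$ attached to the $\ell^n$-division fields $K_n$ of $R$, and the existence of a natural density for such an infinite intersection is exactly the nontrivial point. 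The paper handles it by the Jones--Rouse device: one introduces subsets $A_n\subseteq B_n$ cut out by the extra condition $\det(\Frob_{\mathfrak p,n,\ell}-\id)\neq 0$, proves $A_n\subseteq\Gamma\subseteq B_n$, and shows $\dens(B_n\setminus A_n)\to 0$ using a lower bound on $[K_n:K]$ from Bertrand's theorem. The resulting density is a limit of Chebotarev densities over the tower (see Remark~\ref{remRJJR}), not a single ratio $\card S/[L:K]$. Your proposal contains no substitute for this step.

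The positivity criterion also does not follow your route in the paper, and your version of it has an independent problem: the image of the Kummer cocycle $\sigma\mapsto(\sigma(Q_i)-Q_i)_i$ is in general only of \emph{bounded index} in the relevant torsion group (this is what Bertrand/Ribet-type results give), not equal to it, and the group it sits in is governed by $G^0_R$, not by the coset $G^1_R$; so the asserted exact equality with ``$G^1_R[\ell^k]$'' is both unproven and not what is true. The paper instead deduces infinitude of $\Gamma$ from \cite[Main Theorem and Theorem 7]{Peruccaorder} applied to the points $R-T$ with $T$ in a suitable $\Gal(\bar K/K)$-stable set of torsion points: $\Gamma$ is infinite iff $n_{R-T}$ is coprime to $\ell$ for some such $T$, and a component-of-$G_R$ computation converts this into the condition that $T$ lies on $G^1_R$ (up to a prime-to-$\ell$ translate by the point $X$ of Section~\ref{preliminaries}). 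You would need either to import those results or to actually prove the largeness statement you defer to; as written, the technical heart of both the existence and the positivity parts is missing.
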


Let $G$ be the product of an abelian variety and a torus defined over a number field $K$. Let $R$ be a point in $G(K)$. Let $\ell$ be a rational prime and let $\mathfrak p$ be a prime of $K$ of good reduction, not over $\ell$. Call $a=\lord (R\bmod \mathfrak p)$.
 Let $L$ be a finite Galois extension of $K$ where the points in $G[\ell^a]$ are defined. Then for every prime $\mathfrak q$ of $L$ over $\mathfrak p$ there exists a unique $T$ in $G[\ell^a]$ such that $\lord(R-T \bmod \mathfrak q)=0$.
We define the $\ell$-part of $(R \bmod \mathfrak p)$ as the $\Gal(\bar{K}/K)$-class of $T$, which is independent of the choice of $\mathfrak q$ and of $L$.

\begin{thm}\label{thmintro}
Let $G$ be the product of an abelian variety and a torus defined over $K$. Let $R$ be a point in $G(K)$. Let $\ell$ be a rational prime. Let $L$ be a finite Galois extension of $K$. Let $\mathcal T$ be a  $\Gal(\bar{K}/K)$-stable subset of $G[\ell^\infty](L)$.
Then the following set of primes of $K$ is either finite or it has a positive natural density:
$$\Gamma=\{\mathfrak p : \; \textit{$\forall$ prime $\mathfrak q$ of $L$ over $\mathfrak p$}\;\; \ord_\ell(R-Y \bmod \mathfrak q)=0\;\; \textit{for some $Y$ in $\mathcal T$}\}$$
Let $G_R$ be the smallest $K$-algebraic subgroup of $G$ containing $R$. Call $n_{R,\ell}$ the greatest power of $\ell$ dividing the number of connected components of $G_R$. Call $G^j_R$ the connected component of $G_R$ containing the point $jR$. The set $\Gamma$ is infinite if and only if $\mathcal T$ contains a point in 
$$\bigcup_{j\equiv 1\!\!\!\! \pmod{n_{R,\ell}} } G^j_R[\ell^\infty](L)$$
\end{thm}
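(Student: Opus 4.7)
The plan is to reformulate the condition defining $\Gamma$ as a condition on the Frobenius substitution in a finite Galois extension $M/K$, apply the Chebotarev density theorem to obtain the dichotomy between finite and positive density, and then determine when the admissible set of Frobenius elements is non-empty.

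Since $G[\ell^\infty](L)$ is finite, so is $\mathcal T$; fix $a\geq 1$ with $\mathcal T\subseteq G[\ell^a]$ and enlarge $L$, if necessary, so that $L\supseteq K(G[\ell^a])$. Because $\mathcal T$ is $\Gal(\bar K/K)$-stable and $\Gal(L/K)$ acts transitively on the primes of $L$ above $\mathfrak p$, the quantifier ``for every $\mathfrak q\mid\mathfrak p$'' in the definition of $\Gamma$ is equivalent to ``for some $\mathfrak q\mid\mathfrak p$''. For such a $\mathfrak q$, of good reduction and not above $\ell$, and for $a$ large enough, the condition $\ord_\ell(R-Y\bmod\mathfrak q)=0$ is equivalent to $(R-Y)\bmod\mathfrak q\in\ell^a G_{\mathfrak q}$. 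Letting $M_Y=L(\ell^{-a}(R-Y))$ be the extension of $L$ obtained by adjoining all $\ell^a$-divisions of $R-Y$, and $M$ the compositum of the $M_Y$ over $Y\in\mathcal T$, standard Kummer theory translates the condition into: the Frobenius of $\mathfrak q$ in $\Gal(M/L)$ fixes some $\ell^a$-division of some $R-Y$. Up to finitely many exceptional primes, $\mathfrak p\in\Gamma$ corresponds to $\Frob_{\mathfrak p}$ lying in a $\Gal(M/K)$-stable subset $C\subseteq\Gal(M/K)$, and the Chebotarev density theorem gives $\dens(\Gamma)=\#C/[M:K]$, which is either $0$ (and $\Gamma$ finite) or positive.

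To analyse when $C\neq\emptyset$, I would examine the image in the component group $G_R/G^0_R\cong\mathbb Z/n_R\mathbb Z$, where $G^0_R$ denotes the identity component. For any good $\mathfrak p$, the reduction of $R$ has class in $G_R/G^0_R$ equal to the class of $R$, which is a generator. Its $\ell$-part therefore has class in the $\ell$-Sylow subgroup, pinned down to the unique coset $j_0\pmod{n_R}$ satisfying $j_0\equiv 1\pmod{n_{R,\ell}}$ and $j_0\equiv 0\pmod{n_R/n_{R,\ell}}$. This forces any $Y\in\mathcal T$ realising the Frobenius condition to lie in $G^{j_0}_R[\ell^\infty](L)$; the remaining summands in $\bigcup_{j\equiv 1\pmod{n_{R,\ell}}}G^j_R[\ell^\infty](L)$ are empty for order reasons, so the theorem's condition collapses to $Y\in G^{j_0}_R[\ell^\infty](L)$. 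Conversely, given such a $Y$, I would produce a Frobenius element in $C$ by combining Kummer-theoretic independence for $R-Y$ (whose image in the component group has prime-to-$\ell$ order, so the question reduces to a statement inside a connected commutative algebraic group) with the known large-image results for the Galois action on $G[\ell^\infty]$ and on the component group of $G_R$.

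The main obstacle is the sufficiency direction of the last step: producing a Frobenius element whose action on $G_R/G^0_R$ equals the class of $R$ and which simultaneously fixes a chosen $\ell^a$-division of $R-Y$ in $\Gal(M/L)$. This reduces to a large-image statement for the image of Galois inside the semidirect product of the component-group representation and the Kummer representation, which should follow from the author's earlier work on reduction of points combined with standard Kummer theory (in the style of Bashmakov--Ribet--Hindry) for connected commutative algebraic groups.
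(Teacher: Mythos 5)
There is a genuine gap at the very first step. You claim that, for $a$ large enough, the condition $\ord_\ell(R-Y \bmod \mathfrak q)=0$ is equivalent to $(R-Y \bmod \mathfrak q)\in \ell^a G_{\mathfrak q}(k_{\mathfrak q})$, and hence that $\Gamma$ is (up to finitely many primes) cut out by a Frobenius condition in a \emph{single} finite Galois extension $M/K$. This is false: for a fixed $\mathfrak q$ one needs $a\geq v_\ell(\card\, G_{\mathfrak q}(k_{\mathfrak q}))$, and this quantity is unbounded as $\mathfrak q$ varies, so no uniform $a$ exists. Consequently $\Gamma$ is not a Chebotarev set for any finite extension, and one cannot conclude $\dens(\Gamma)=\card C/[M:K]$. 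This is precisely the difficulty the paper's Theorem~\ref{RJunico} is designed to handle: one works with the infinite tower $K_n$ of fields of $\ell^n$-division points of $R-Y$, defines $B_n=\{\mathfrak p: \Frob_{\mathfrak p}\in H_n\}$ and a refinement $A_n$ (imposing $\det(\Frob_{\mathfrak p,n,\ell}-\id)\neq 0$), proves $A_n\subseteq\Gamma\subseteq B_n$ and that $\dens(B_n\setminus A_n)\to 0$, using a lower bound on the degree of the Kummer tower from Bertrand's theorem. Without some such limiting argument (Jones--Rouse, or Pink--R\"utsche) the existence of the natural density is not established; and even granting existence, the dichotomy ``finite or positive density'' does not come for free, since a priori $\Gamma$ could be infinite of density zero. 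The paper excludes this by a separate argument: reducing to $\mathcal T$ a single Galois orbit of $T$ and invoking the Main Theorem and Theorem~7 of \cite{Peruccaorder} for the point $R-T$, which show that $\Gamma$ infinite forces $n_{R-T}$ coprime to $\ell$ and that this in turn yields a positive density.

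Two further points on the infinitude criterion. For necessity, before you can speak of the image of $Y$ in the component group of $G_R$ you must first show $Y\in G_R(\bar K)$ at all ($\mathcal T$ only lives in $G[\ell^\infty](L)$); the paper gets this from the injectivity of reduction on torsion (Lemma~\ref{torsioniso}) applied to $G$ and $G_R$, since $(Y\bmod\mathfrak q)$ is a multiple of $(R\bmod\mathfrak q)$. Your observation that the union over $j\equiv 1\pmod{n_{R,\ell}}$ collapses to the single component $j_0$ determined by $j_0\equiv 1\pmod{n_{R,\ell}}$ and $j_0\equiv 0\pmod{n_R/n_{R,\ell}}$ is correct. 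For sufficiency, you correctly identify the needed statement but leave it as an appeal to unspecified ``large image'' results; the paper instead deduces it directly by applying \cite[Theorem 7]{Peruccaorder} to the point $R-T$, whose associated group $G_{R-T}$ is connected under the hypothesis. As written, the proposal does not constitute a proof.
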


Notice that throughout the paper we replace $\ell$ by a finite set $S$ of rational primes.

To prove the existence of the densities, we apply a method by Jones and Rouse (\cite[Theorem 7]{JonesRouse}). An alternative method is due to Pink and R\"utsche, see \cite[Chapter 4]{Ruetsche}. 

To determine the conditions under which the densities are positive, we refine results of \cite{Peruccaorder} which were based on a method by Khare and Prasad (\cite[Lemma 5]{KharePrasad}). An alternative method is due to Pink, see \cite[Theorem 4.1]{Pink}.
Notice that the same method by Khare and Prasad has been applied in the following papers by Banaszak, Gajda, Kraso\'n, Bara\'nczuk and G\'ornisiewicz: \cite{BGKdetecting}, \cite{Baranczuk06}, \cite{GajdaGornisiewicz}, \cite{BanaszakKrason}.

Some explicit calculations for the density have been made by Jones and Rouse in \cite{JonesRouse}. About the order of the reductions of points on the multiplicative group and elliptic curves, see \cite{Schinzelprimitive} and \cite{CheonHahn} respectively.

A reason to study the order of the reduction of points is the following. Fix a number field $K$. Let $A$ be a simple abelian variety defined over $K$ and let $R$ be a point in $A(K)$ of infinite order. Consider the sequence $\{\ord (R \bmod\mathfrak p)\}$ indexed by the primes $\mathfrak p$ of $K$ (put $1$ if the expression is not well-defined).
This sequence determines the isomorphism class of $A$ and determines $R$ up to isomorphism. This is a corollary of the results on the support problem (\cite[Corollary 8 and Proposition 9]{Peruccatwo}).

\section{Preliminaries}\label{preliminaries}

Let $G$ be the product of an abelian variety and a torus defined over a number field $K$. Let $R$ be a point in $G(K)$. Call $G_R$ the smallest $K$-algebraic subgroup of $G$ containing $R$, which is the Zariski closure of $\mathbb ZR$. The connected component of the identity of $G_R$ is the product of an abelian variety and a torus defined over $K$ (see \cite[Proposition 5]{Peruccaorder}). Call it $G^0_R$.
Let $n_R$ be the number of connected components of $G_R$.

For every finite extension $L$ of $K$, the smallest $L$-algebraic subgroup of $G$ containing $R$ is the base change $G_R\times_K \Spec L$. Notice that $n_R$ does not depend on the field $L$ because $G^0_R$ is geometrically connected (since it has a rational point).

The point $n_RR$ is the smallest positive multiple of $R$ which belongs to $G^0_R$. 
There exists a torsion point $X$ in $G_R(\bar{K})$ of order $n_R$ such that $R-X$ belongs to $G_R^0$ (see \cite[Lemma 1]{Peruccaorder}). In particular, the point $n_RX$ is the smallest positive multiple of $X$ which belongs to $G^0_R$.
The group of connected components of $G_R$ is cyclic of order $n_R$. The connected components of $G_R$ are $G^0_R,\ldots, G^{n_R-1}_R$, where $G^i_R$ is the connected component of $G_R$ containing $iR$ (or equivalentely containing $iX$).

\begin{lem}\label{components}
For all but finitely many primes $\mathfrak p$ of $K$, the connected components of $(G_R \bmod \mathfrak p)$ are $(G^i_R \bmod \mathfrak p)$ for $i=0,\ldots, n_R-1$. In particular, the group of connected components of $(G_R \bmod \mathfrak p)$ is cyclic of order $n_R$. If $L$ is a finite Galois extension of $K$, the analogue properties hold for every prime $\mathfrak q$ of $L$ lying outside a finite set of primes of $K$ not depending on $L$.
\end{lem}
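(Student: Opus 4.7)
The starting observation is that each coset $G^i_R$ is defined over $K$---it contains the $K$-rational point $iR$---and, being a translate of the geometrically connected group $G^0_R$, is itself geometrically irreducible. Over $K$ one therefore has the disjoint-union decomposition $G_R = \coprod_{i=0}^{n_R-1} G^i_R$ as schemes, the cosets being pairwise disjoint and each one both open and closed in $G_R$.

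I would then spread out to $\Spec \mathcal{O}$. Pick an open subscheme $U \subset \Spec \mathcal{O}$ over which models $\mathcal{G}, \mathcal{G}_R, \mathcal{G}^i_R$ are fixed and such that: \emph{(i)} the intersections $\mathcal{G}^i_R \cap \mathcal{G}^j_R$ are empty for $i\neq j$, and the complement $\mathcal{G}_R \setminus \bigcup_i \mathcal{G}^i_R$ is empty---so that $\mathcal{G}_R = \coprod_i \mathcal{G}^i_R$ over $U$; and \emph{(ii)} each morphism $\mathcal{G}^i_R \to U$ has geometrically irreducible fibres. Both items hold outside a finite set of primes: (i) because the two relevant loci are empty at the generic point, hence on an open subset of the base, and (ii) by the standard spreading-out of geometric integrality for finite-type morphisms of Noetherian schemes.

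For every $\mathfrak{p} \in U$ one then has $(G_R \bmod \mathfrak{p}) = \coprod_{i=0}^{n_R-1} (G^i_R \bmod \mathfrak{p})$ with each piece geometrically irreducible, hence a single connected component. The component group is generated by the image of $R \bmod \mathfrak{p}$, which has order $n_R$ since the $n_R$ cosets $(G^i_R \bmod \mathfrak{p})$ are distinct, so it is cyclic of order $n_R$. For the Galois-extension statement: both ``being geometrically irreducible'' and ``being disjoint'' are stable under arbitrary base change, so for any prime $\mathfrak{q}$ of $L$ above $\mathfrak{p} \in U$ the decomposition $(G_R \bmod \mathfrak{q}) = \coprod_i (G^i_R \bmod \mathfrak{q})$ is still into connected components. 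Thus the same set $U$, intrinsic to $K$, controls the conclusion over every $L$.

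The main obstacle is just verifying the spreading-out assertions in (i) and (ii) precisely. These are standard; the conceptual point that makes the argument work cleanly is that $iR \in G(K)$ already forces $G^i_R$ to be $K$-rational and geometrically irreducible, so no auxiliary field extension enters the construction of the model---which is exactly why the exceptional set of primes depends only on $K$.
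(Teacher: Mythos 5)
Your argument is correct, but it takes a genuinely different route from the paper. The paper's proof is arithmetic: it uses the torsion point $X$ of order $n_R$ with $R-X\in G^0_R$ (from Section 2) and applies Kowalski's injectivity-of-reduction lemma to $G[n_R]$ and $G^0_R[n_R]$, deducing that for all but finitely many $\mathfrak p$ no multiple $iR$ with $0<i<n_R$ reduces into $(G^0_R\bmod\mathfrak p)$, so the $n_R$ cosets stay distinct; the $L$-statement then follows because the component group of $(G_R\bmod\mathfrak q)$ is a priori a cyclic group of order dividing $n_R$ and $(G_R\bmod\mathfrak q)$ is a base change of $(G_R\bmod\mathfrak p)$. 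You instead run a purely geometric spreading-out argument: the decomposition $G_R=\coprod_i G^i_R$ into geometrically irreducible clopen pieces, each with a $K$-point, extends over an open $U\subseteq\Spec\mathcal O$ by constructibility (emptiness of the pairwise intersections and of the complement at the generic point) and by spreading out geometric irreducibility of fibres, and both properties are stable under arbitrary base change, which gives the independence from $L$ for free. Your route is cleaner and more self-contained conceptually, at the cost of invoking EGA-style spreading-out results; the paper's route reuses the same lemma of Kowalski that drives the rest of Section 2 and simultaneously yields the statement it actually needs later, namely that $(n_RR\bmod\mathfrak p)$ is the smallest positive multiple of $(R\bmod\mathfrak p)$ lying in $(G^0_R\bmod\mathfrak p)$. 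One small point worth making explicit in your write-up: the component group of $(G_R\bmod\mathfrak p)$ is generated by the class of $(R\bmod\mathfrak p)$ because every component $(G^i_R\bmod\mathfrak p)$ contains $(iR\bmod\mathfrak p)$, and the assignment $i\mapsto[(G^i_R\bmod\mathfrak p)]$ is a homomorphism with kernel $n_R\mathbb Z$ precisely by the disjointness you arranged; with that observed, the cyclicity of order $n_R$ is immediate.
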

\begin{proof}
Let $F$ be a finite Galois extension of $K$ where the points in $G[n_R]$ are defined. Apply \cite[Lemma 4.4]{Kowalskikummer} to $G[n_R]$ and to $G^0_R[n_R]$. We deduce that for all but finitely many primes $\mathfrak w$ of $F$ the following holds: $(n_RX \bmod \mathfrak w)$ is the smallest positive multiple of $(X \bmod \mathfrak w)$ which belongs to $(G^0_R \bmod \mathfrak w)$.
Thus for all but finitely many primes $\mathfrak p$ of $K$ the point $(n_RR \bmod \mathfrak p)$ is the smallest positive multiple of $(R \bmod \mathfrak p)$ which belongs to $(G^0_R \bmod \mathfrak p)$. The first assertion follows.

Let $\mathfrak q$ be a prime of $L$ lying over a prime $\mathfrak p$ of $K$. 
The group of connected components of $(G_R \bmod \mathfrak q)$ is cyclic of order dividing $n_R$. Then the second assertion holds since $(G_R \bmod \mathfrak q)$ is a base change of $(G_R \bmod \mathfrak p)$, up to discarding a set of primes $\mathfrak p$ of $K$ not depending on $L$.
\end{proof}

\begin{lem}[see also {\cite[Lemma 4.4]{Kowalskikummer}}]\label{torsioniso}
Let $L$ be a finite Galois extension of $K$. Let $n$ be a positive integer such that $G[n]\subseteq G(L)$.
For every prime $\mathfrak q$ of $L$ coprime to $n$ and not lying over a finite set of primes of $K$ (not depending on $n$ nor on $L$), the reduction modulo $\mathfrak q$ gives an isomorphism from $G^i_R[n]$ to $(G^i_R \bmod \mathfrak q)[n]$ for every $i=0,\ldots, n_R-1$.
\end{lem}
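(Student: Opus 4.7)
The strategy is to apply Kowalski's Lemma 4.4 to the algebraic subgroup $G_R \subseteq G$ in order to get a reduction bijection on all of $G_R[n]$, and then use Lemma~\ref{components} to see that this bijection respects the decomposition into cosets of $G^0_R$.

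\emph{Step 1 (reduction is a bijection on $G_R[n]$).} Since $G^0_R$ is a product of an abelian variety and a torus, $G_R$ is a smooth commutative $K$-algebraic subgroup of $G$, with finitely many cosets of $G^0_R$ as its connected components. Applying Kowalski's Lemma 4.4 to $G_R$ (or, if one prefers to cite the lemma only in the connected case, applying it to $G^0_R$ and propagating to each coset by translation through the torsion point $X$ from the Preliminaries) yields a finite set $\Sigma_0$ of primes of $K$, independent of $n$ and of $L$, such that for every prime $\mathfrak{q}$ of $L$ coprime to $n$ and not lying over $\Sigma_0$, reduction modulo $\mathfrak{q}$ induces a bijection
$$G_R[n] \;\xrightarrow{\sim}\; (G_R \bmod \mathfrak{q})[n].$$

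\emph{Step 2 (matching the component decompositions).} By Lemma~\ref{components}, after enlarging $\Sigma_0$ by a further finite set of primes of $K$ (still independent of $L$), for every prime $\mathfrak{q}$ of $L$ not lying over this set the connected components of $(G_R \bmod \mathfrak{q})$ are exactly the pairwise distinct $(G^i_R \bmod \mathfrak{q})$ for $i=0,\ldots,n_R-1$. Setting $G^i_R[n] := G^i_R \cap G[n]$, this gives matching partitions
$$G_R[n] = \bigsqcup_{i=0}^{n_R-1} G^i_R[n], \qquad (G_R \bmod \mathfrak{q})[n] = \bigsqcup_{i=0}^{n_R-1} (G^i_R \bmod \mathfrak{q})[n].$$

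\emph{Step 3 (restriction to each component).} A point of $G^i_R$ reduces to a point of $(G^i_R \bmod \mathfrak{q})$, so the bijection of Step 1 sends $G^i_R[n]$ into $(G^i_R \bmod \mathfrak{q})[n]$. Since both decompositions in Step 2 are disjoint with the same index set and the total map is a bijection, the restriction to each $i$ is itself a bijection, which is the desired isomorphism.

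The main obstacle is Step 1: applying Kowalski's lemma to the possibly disconnected group $G_R$. This is not really an issue as long as one is willing either to read the lemma as covering smooth commutative algebraic groups whose identity component is a product of an abelian variety and a torus, or to deduce the non-connected case by translating each coset $G^i_R$ onto $G^0_R$ via $iX$ and comparing with Kowalski's bijection for $G^0_R[n]$; in either case the exceptional set of primes of $K$ depends only on $G$ and $R$, not on $n$ or $L$.
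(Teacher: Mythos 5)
Your proposal is correct and follows the same overall strategy as the paper (Kowalski's lemma, then Lemma~\ref{components} to match the component decompositions, then restriction of a global bijection to each piece). The one point where you diverge is the obstacle you yourself flag in Step 1: the paper never applies Kowalski's lemma to the disconnected group $G_R$. Instead it applies it to the two \emph{connected} groups $G^0_R$ and the ambient $G$ (connected by hypothesis, being a product of an abelian variety and a torus): injectivity on $G[n]$ gives injectivity on each $G^i_R[n]$, and the bijection on $G^0_R[n]$ gives surjectivity onto $(G^i_R \bmod \mathfrak q)[n]$ after translating by any element of $G^i_R[n]$. Your alternative of ``translating each coset onto $G^0_R$ via $iX$'' needs a small correction: $G^i_R[n]$ is not $iX+G^0_R[n]$ unless $niX=0$; rather, $G^i_R[n]$ is nonempty (because $[n]$ is surjective on $G^0_R(\bar K)$) and is a translate $P+G^0_R[n]$ by any of its elements $P$, and likewise $(G^i_R\bmod\mathfrak q)[n]=(P\bmod\mathfrak q)+(G^0_R\bmod\mathfrak q)[n]$; with that adjustment your argument goes through and the exceptional set of primes depends only on $G$ and $R$, as required.
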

\begin{proof}
By \cite[Lemma 4.4]{Kowalskikummer}, the property in the statement holds for $G^0_R[n]$ and for $G[n]$. By Lemma~\ref{components}, up to excluding a finite set of primes $\mathfrak q$ (lying over a finite set of primes of $K$ not depending on $n$ nor on $L$), we may assume that  the connected components of $(G_R \bmod \mathfrak q)$ are $(G_R^i \bmod \mathfrak q)$ for $i=0,\ldots, n_R-1$. We conclude because the reduction modulo $\mathfrak q$ maps  $G^i_R[n]$ to $(G^i_R \bmod \mathfrak q)[n]$.
\end{proof}

\begin{lem}[see also {\cite[Proposition C.1.5]{HindrySilvermanbook}}]\label{ramify}
Let $m$ be a positive integer. For every $n>0$ call $K_n$ the smallest extension of $K$ over which the $m^n$-th roots of $R$ are defined. Then the primes of $K$ which ramify in  $\bigcup_{n> 0} K_n$ are contained in a finite set.
\end{lem}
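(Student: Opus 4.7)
The plan is to exhibit a single finite set $S_0$ of primes of $K$ containing every prime that ramifies in $K_n$, uniformly in $n$. Since an infinite algebraic extension is ramified at $\mathfrak{p}$ precisely when some finite subfield of it is, this will suffice. I would take $S_0$ to consist of the primes of bad reduction of $G$, the primes lying above $m$, and the finite exceptional set produced by Lemma~\ref{torsioniso} applied to $G$ itself; on primes outside $S_0$ the reduction map is injective on $G[m^n]$ for every $n$.

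To analyze $K_n/K$, I would factor it through the intermediate field $E_n := K(G[m^n])$. Since any two $m^n$-th roots of $R$ in $G(\bar K)$ differ by an element of $G[m^n]$, and the set of such roots is a torsor under $G[m^n]$, one has $E_n \subseteq K_n$ and $K_n = E_n(Y)$ for any fixed $m^n$-th root $Y$ of $R$. In particular $K_n/K$ is Galois. That $E_n/K$ is unramified at every $\mathfrak{p} \notin S_0$ is the N\'eron--Ogg--Shafarevich criterion for the abelian part of $G$, together with the analogous (and easier) statement for the toric part: in both cases inertia at a prime of good reduction not lying over $m$ acts trivially on $m^n$-torsion, hence trivially on $E_n$.

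It remains to show that the top layer $K_n/E_n$ is unramified above any $\mathfrak{p} \notin S_0$. Fix a prime $\mathfrak{Q}$ of $K_n$ over $\mathfrak{p}$ and let $I \subseteq \Gal(K_n/K)$ be its inertia group. Because $E_n/K$ is already unramified at $\mathfrak{p}$, the image of $I$ in $\Gal(E_n/K)$ is trivial, so $I \subseteq \Gal(K_n/E_n)$. The assignment $\sigma \mapsto \sigma(Y)-Y$ embeds $\Gal(K_n/E_n)$ into $G[m^n]$ (the image lies in $G[m^n]$ because $m^n Y = R$ is $K$-rational, and the map is injective because $Y$ generates $K_n$ over $E_n$). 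For $\sigma \in I$ the element $\sigma(Y)-Y$ reduces to $0$ modulo $\mathfrak{Q}$, since $\sigma$ acts trivially on residue fields; the injectivity of reduction on $G[m^n]$ guaranteed by $\mathfrak{p} \notin S_0$ then forces $\sigma(Y) = Y$, i.e.\ $\sigma = \id$. Hence $I = \{1\}$.

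The main subtlety I anticipate is the interplay between reduction of points and the Galois action: one has to know that reduction of a point $Y\in G(K_n)$ commutes with $\Gal(K_n/E_n)$ (which is standard given a good reduction model), and one has to ensure that the exceptional set of primes supplied by Lemma~\ref{torsioniso} is genuinely independent of $n$, which is exactly what that lemma guarantees. Once these two inputs are in hand, the tower argument sketched above is essentially formal.
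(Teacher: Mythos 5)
Your proof is correct and is essentially the paper's argument: both rest on the uniform (in $n$) injectivity of reduction on $G[m^n]$ away from a fixed finite set, and on the observation that an inertia element fixes the reductions of the $m^n$-th roots of $R$ and hence, by that injectivity, the roots themselves. The only difference is cosmetic — you split $K_n/K$ through $E_n=K(G[m^n])$ and invoke N\'eron--Ogg--Shafarevich for the bottom layer, whereas the paper treats all the roots at once, which already subsumes the torsion field since any two roots differ by an element of $G[m^n]$.
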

\begin{proof}
It suffices to prove that there exists a finite set $J$ of primes of $K$ (not depending on $n$) such that the following holds: every prime $\mathfrak p$ of $K$ outside this set does not ramify in $K_n$.
By \cite[Lemma 4.4]{Kowalskikummer}, there exists a finite set $J$ of primes of $K$ (not depending on $n$) satisfying the following property: for every prime $\mathfrak p$ of $K$ outside $J$ and for every prime $\mathfrak q$ of $K_n$ over $\mathfrak p$, the reduction map modulo $\mathfrak q$ is injective on $G[m^n]$.
It suffices to show that the inertia group of $\mathfrak q$ over $\mathfrak p$ is trivial. Let $\sigma$ be in the inertia group of $\mathfrak q$ over $\mathfrak p$. Then $\sigma$ induces the identity automorphism on the reduction modulo $\mathfrak q$ of the $m^n$-th roots of $R$. Because of the injectivity of the reduction modulo $\mathfrak q$ on $G[m^n]$, $\sigma$ induces the identity automorphism on the $m^n$-th roots of $R$  hence it is the identity of $\Gal(K_n/K)$.
\end{proof}

\section{On the existence of the density}\label{RJJR}

In this section we generalize a result by Jones and Rouse (\cite[Theorem 7]{JonesRouse}). We apply the same method to prove the existence of the natural density. 

The results by Pink and R\"utsche in \cite[Chapter 4]{Ruetsche} concern the existence of the Dirichlet density. Their method has the advantage (say with respect to Corollary~\ref{existencecor}) to allow the set $\mathcal T$ to be infinite.

\begin{thm}\label{RJunico}
Let $G$ be the product of an abelian variety and a torus defined over a number field $K$. Let $R$ be a point in $G(K)$. Let $S$ be a finite set of rational primes and let $m$ be the product of the elements of $S$.
Let $T$ be a point in $G[m^\infty](L)$, where $L$ is a finite Galois extension of $K$. Call $\mathcal T$ the $\Gal(\bar{K}/K)$-conjugacy class of $T$.
Then the following set of primes of $K$ has a natural density:
$$\Gamma=\{\mathfrak p : \forall \ell\in S\;\; \ord_\ell(R-T \bmod \mathfrak q)=0\;\;\textit{for some prime $\mathfrak q$ of $L$ over $\mathfrak p$}\}$$
$$=\{\mathfrak p : \forall \ell\in S\;\;\; \textit{$\forall$ prime $\mathfrak q$ of $L$ over $\mathfrak p$}\;\; \ord_\ell(R-Y \bmod \mathfrak q)=0\;\; \textit{for some $Y$ in $\mathcal T$}\}$$
\end{thm}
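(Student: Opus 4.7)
The plan is to pull back the defining condition of $\Gamma$ to a Chebotarev condition in a Kummer tower over $K$, and then to apply the method of \cite[Theorem 7]{JonesRouse} to pass to the limit. First, the equivalence of the two descriptions of $\Gamma$ is a short Galois-transitivity argument: if $\mathfrak q_0, \mathfrak q$ are primes of $L$ above $\mathfrak p$ and $\sigma \in \Gal(L/K)$ sends $\mathfrak q_0$ to $\mathfrak q$, then because $R \in G(K)$ we have $\sigma(R - T) = R - \sigma(T)$ with $\sigma(T) \in \mathcal T$, and reduction modulo $\mathfrak q$ preserves the $\ell$-adic valuation of the order.

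Next I would set up the Kummer tower. For each $Y \in \mathcal T$, each $\ell \in S$ and each $n \geq 1$, let $L(Y,\ell,n)$ be the smallest extension of $L$ containing all $\ell^n$-th roots of $R - Y$ in $G(\bar K)$; let $L_n$ be the compositum of these fields over all $Y$ and $\ell$. Then $L_n/K$ is finite Galois (any $\sigma \in \Gal(\bar K/K)$ permutes the $L(Y,\ell,n)$ among themselves, using that $R$ is $K$-rational and $\mathcal T$ is $\Gal(\bar K/K)$-stable), and by Lemma~\ref{ramify} the primes of $K$ ramifying in $\bigcup_n L_n$ lie in a finite set independent of $n$. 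An elementary fact about finite abelian groups gives $\ord_\ell(R - Y \bmod \mathfrak q) = 0$ if and only if $(R - Y) \bmod \mathfrak q$ is an $\ell^n$-th power in $G(k(\mathfrak q))$ for every $n$. By Lemma~\ref{torsioniso}, for $n$ fixed and $\mathfrak p$ outside a finite exceptional set, this divisibility translates into the statement that a Frobenius at a prime of $L_n$ over $\mathfrak q$ fixes some $\ell^n$-th root of $R - Y$ in $G(\bar K)$. Hence the level-$n$ set
$$\Gamma_n := \{\mathfrak p :\forall \ell \in S,\forall \mathfrak q | \mathfrak p,\exists Y \in \mathcal T,\; \Frob_{\mathfrak q} \text{ fixes some } \ell^n\text{-th root of } R - Y\}$$
is, up to finitely many primes, a Chebotarev set for a specific conjugation-stable union $C_n$ of elements in $\Gal(L_n/K)$, with natural density $d_n = |C_n|/[L_n : K]$. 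Because $C_{n+1}$ restricts into $C_n$, the sequence $d_n$ is non-increasing; and $\Gamma = \bigcap_n \Gamma_n$ up to a finite set.

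To pass from the level-$n$ densities to a density for $\Gamma$ itself, I would follow \cite[Theorem 7]{JonesRouse}. The easy direction yields $\bar d(\Gamma) \leq d := \lim_n d_n$. For the matching lower bound, one writes $\Gamma_n \setminus \Gamma = \bigcup_{k \geq 0}(\Gamma_n \setminus \Gamma_{n+k})$ and bounds its upper density by $d_n - d$ via a uniform Chebotarev estimate along the tower; this forces $\underline d(\Gamma) \geq d$ and establishes the existence of the density. The main obstacle is exactly this last step: the Chebotarev theorem only controls each finite level $L_n/K$, and interchanging the limit in $x$ (the prime-counting variable) with the limit in $n$ (along the tower) is not automatic. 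This is precisely the technical heart of the Jones--Rouse method, which supplies the required uniform control; once it is in place, everything else is routine bookkeeping with Galois actions and reductions.
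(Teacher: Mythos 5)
Your setup (the equivalence of the two descriptions via Galois transitivity, the Kummer tower, the translation of $\ord_\ell(R-Y\bmod\mathfrak q)=0$ into infinite $\ell$-divisibility and hence into a Frobenius condition at each finite level via Lemma~\ref{torsioniso} and Lemma~\ref{ramify}, and the identification $\Gamma=\bigcap_n\Gamma_n$ up to finitely many primes) matches the paper's first three steps. But the step you label ``the main obstacle'' and defer to the Jones--Rouse method \emph{is} the theorem: everything up to that point only gives $\bar d(\Gamma)\le d=\lim_n d_n$, and a decreasing intersection of Chebotarev sets need not have a density at all, let alone density $d$. Moreover, the route you sketch for the lower bound --- writing $\Gamma_n\setminus\Gamma=\bigcup_{k\ge 0}(\Gamma_n\setminus\Gamma_{n+k})$ and bounding its upper density by $d_n-d$ --- is exactly the illegitimate interchange of limits; there is no ``uniform Chebotarev estimate along the tower'' that does this for free, and no such bound holds for a general increasing union whose members have densities tending to $d_n-d$.

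What actually closes the gap is a specific arithmetic construction that your proposal does not contain. One defines inner approximations $A_n=\{\mathfrak p\in B_n:\det(\Frob_{\mathfrak p,n,\ell}-\id)\neq 0\ \forall\ell\in S\}$, where $\Frob_{\mathfrak p,n,\ell}$ acts on $G'[\ell^{n+a}]$ (this requires the preliminary reduction of the paper's first step: discarding the finite case where $m$ and $n_{R-T}$ are not coprime and replacing $R,T$ by multiples so that one works inside the connected group $G'=G^0_R$). One then shows (a) $A_n\subseteq A_{n+1}$, because the nonvanishing determinant forces the image of $\Frob_{\mathfrak p,n+1}-\id$ to contain $m$-th roots of the image of $\Frob_{\mathfrak p,n}-\id$, so membership in $H_n$ propagates to $H_{n+1}$; hence $A_n\subseteq\bigcap_k B_k=\Gamma$; and (b) $\dens(B_n\setminus A_n)\le c/\ell^{n+a}$, which is a genuinely quantitative estimate: the fibre $\{\sigma:\sigma_{n,\ell}=g\}$ has size at least $\frac1c\card(G'[\ell^{n+a}])$ by Bertrand's theorem on Kummer theory for $G'$ (\cite{Bertrandgalois}), while the subset of such $\sigma$ lying in $H_n$ with $\det(g-\id)=0$ has size at most $\card(\mathrm{im}(g-\id))\le\ell^{-(n+a)}\card(G'[\ell^{n+a}])$ times the number of conjugates of $T$. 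Squeezing $A_n\subseteq\Gamma\subseteq B_n$ then gives existence of the density. Without the determinant condition, the monotonicity $A_n\subseteq A_{n+1}$, and the Bertrand lower bound on the Kummer degrees, the proof does not go through, so as written your argument has a genuine gap at its decisive step.
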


\begin{proof} \textit{First step.} 
For every $Y\in \mathcal T$, we have $G^0_{R-Y}=G^0_R$ because $R$ and $R-Y$ have a common multiple.
 Since $G^0_R$ and $R$ are defined over $K$, it follows that  $n_{R-Y}=n_{R-T}$ for every $Y\in \mathcal T$. If $m$ and $n_{R-T}$ are not coprime then by \cite[Proposition 2]{Peruccaorder} the set $\Gamma$ is finite and in particular it has density zero. Now assume that  $m$ and $n_{R-T}$ are coprime. By replacing $R$ and $T$ by $n_{R-T}R$ and $n_{R-T}T$ respectively, we may assume that for every $Y\in \mathcal T$ the algebraic group $G_{R-Y}$ is connected hence equal to $G^0_R$. 
Call $G'=G^0_R$, which is the product of an abelian variety and a torus defined over $K$. 

\textit{Second step.} %In this step we exclude finitely many primes of $K$ to ensure some conditions. Notice that this does not interfere with the existence of the density.
Let $a$ be such that $m^a(R-Y)=m^aR$ for every $Y\in\mathcal T$. In particular, $m^aR$ belongs to $G'$.
Call $K_n$ the smallest extension of $K$ over which the $m^{n+a}$-th roots of $m^aR$ in $G'$ are defined. By Lemma~\ref{ramify}, we may consider only the primes $\mathfrak p$ of $K$ which do not ramify in $\bigcup_{n> 0} K_n$. We also avoid the primes of bad reduction. By Lemma~\ref{torsioniso}, we may also assume the following: for every $n$ and for every prime $\mathfrak w$ of $K_n$ over $\mathfrak p$ the reduction modulo $\mathfrak w$ is injective on $G'[m^{n+a}]$. Call $k_{\mathfrak w}$ the residue field. Then, for every $Y\in\mathcal T$,  the reduction modulo $\mathfrak w$ induces a bijection from the $m^n$-th roots of $R-Y$ in $G'$ to the $m^n$-th roots of $(R-Y \bmod \mathfrak w)$ in $G'_{\mathfrak w}(k_{\mathfrak w})$.

By excluding finitely many primes $\mathfrak p$ of $K$, we may also assume that $G_{\mathfrak w}$ (respectively $G'_{\mathfrak w}$) is the base change of $G_{\mathfrak p}$ (respectively $G'_{\mathfrak p}$).
In particular, we identify $G_{\mathfrak w}(k_{\mathfrak w})$ (respectively $G'_{\mathfrak w}(k_{\mathfrak w})$) with $G_{\mathfrak p}(k_{\mathfrak w})$ (respectively $G'_{\mathfrak p}(k_{\mathfrak w})$).

\textit{Third step.}
Call $H_n$ the subset of $\Gal(K_n/K)$ consisting of the automorphisms which fix some $m^n$-th root of $R-Y$ in $G'$ for some $Y\in \mathcal T$.
We write $\Frob_{\mathfrak p}$ for the Frobenius at $\mathfrak p$ without specifying the prime of $K_n$ lying over $\mathfrak p$.

Since $H_n$ is closed by conjugation, the following set of primes of $K$ is well-defined:
$$B_n=\{\mathfrak p\; :\;\; \Frob_{\mathfrak p} \in H_n\}$$
The set $B_n$ has a natural density because of the Cebotarev Density Theorem.

Now we prove that $B_n\supseteq \Gamma$ for every $n$.
Take $\mathfrak p$ in $\Gamma$ and let $\mathfrak q$ be a prime of $L$ over $\mathfrak p$. Let $Y\in \mathcal T$ be such that the order of $(R-Y \bmod \mathfrak q)$ is coprime to $m$ or equivalently such that the orbit of $(R-Y \bmod \mathfrak q)$ via the iterates of $[m]$ is periodic.
Since $(R \bmod \mathfrak q)$ belongs to $G_{\mathfrak p}(k_{\mathfrak p})$ and $(Y \bmod \mathfrak q)$ is a multiple of $(R \bmod \mathfrak q)$, the point $(R-Y \bmod \mathfrak q)$ belongs to $G_{\mathfrak p}(k_{\mathfrak p})\cap (G'(L) \bmod \mathfrak q)$.
Then $(R-Y \bmod \mathfrak q)$ has $m^n$-th roots in that set for every $n$. Fix $n$ and let $\mathfrak w$ be a prime of $K_n$ over $\mathfrak q$.
We deduce that there exists $Z$ in $G'(K_n)$ such that $m^nZ=R-Y$ and $(Z \bmod \mathfrak w)$ is in $G_{\mathfrak p}(k_{\mathfrak p})$. In particular, $Z$ is fixed by $\Frob_{\mathfrak p}$.

Now we suppose that $\mathfrak p$ belongs to $B_n$ for infinitely many $n$ and show that $\mathfrak p$ belongs to $\Gamma$ . We have to prove that for every prime $\mathfrak q$ of $L$ over $\mathfrak p$ there exists $Y\in \mathcal T$ such that the orbit of $(R-Y \bmod \mathfrak q)$ via the iterates of $[m]$ is periodic.
Since $\mathcal T$ and $G_{\mathfrak q}(k_{\mathfrak q})$ are finite sets, it suffices to show that for infinitely many $n$ the point $(R-Y \bmod \mathfrak q)$ has $m^n$-th roots in $G_{\mathfrak q}(k_{\mathfrak q})$ for some $Y\in \mathcal T$.

Let $n$ be such that $\mathfrak p$ belongs to $B_n$ and fix a prime $\mathfrak w$ of $K_n$ over $\mathfrak q$. Let $Y\in \mathcal T$ be such that there exists $Z$ in $G'(K_n)$ satisfying the following properties: $m^nZ=R-Y$ and $Z$ is fixed by $\Frob_{\mathfrak p}$. Then $(Z \bmod \mathfrak w)$ is in $G_{\mathfrak p}(k_{\mathfrak p})$ and $m^n(Z \bmod \mathfrak w)=(R-Y \bmod \mathfrak w)$. It follows that $(R-Y \bmod \mathfrak q)$ has $m^n$-th roots in $G_{\mathfrak q}(k_{\mathfrak q})$.

\textit{Fourth step.}
For every $\sigma$ in $\Gal(K_n/K)$, call $\sigma_{n}$ (respectively $\sigma_{n,\ell}$) the image of $\sigma$ in the group of automorphisms of $G'[m^{n+a}]$ (respectively  $G'[\ell^{n+a}]$). Notice that the determinant of $\sigma_{n,\ell}$ is an element of $\mathbb Z/\ell^{n+a}\mathbb Z$ and the fact that the determinant is zero is invariant by conjugation.
Then the following set of primes of $K$ is well-defined and it has a natural density because of the Cebotarev Density Theorem:
$$A_n=\{\mathfrak p\in B_n\; :\;  \det (\Frob_{\mathfrak p, n,\ell}-\id))\neq 0\; \; \forall \ell\in S \}$$

We now prove that $A_n \subseteq \Gamma$ for every $n$. It suffices to show that for every $n$ it is $A_n\subseteq A_{n+1}$ since then $A_n$ is contained in $B_n$ for infinitely many $n$.

Fix $\mathfrak p$ in $A_n$. Since $\det (\Frob_{\mathfrak p, n,\ell}-\id))\neq 0$ it follows that $\det (\Frob_{\mathfrak p, n+1,\ell}-\id))\neq 0$. Furthermore, 
the image of $(\Frob_{\mathfrak p, n,\ell}-\id)$ in $G'[\ell^{n+a}]$ has the same index as the image of $(\Frob_{\mathfrak p, n+1,\ell}-\id)$ in $G'[\ell^{n+a+1}]$. Thus the $m$-th roots of the image of $(\Frob_{\mathfrak p, n}-\id)$ belong to the image of $(\Frob_{\mathfrak p, n+1}-\id)$.

For every $Y \in \mathcal T$, let $P_Y$ be a $m^{n+1}$-th root of $R-Y$ in $G'$. Notice that any other $m^{n+1}$-th root of $R-Y$ in $G'$ differs from $P_Y$ by an element of $G'[m^{n+1}]$. Then $\Frob_{\mathfrak p}$ is in $H_{n+1}$ if and only if for some $Y\in\mathcal T$ the point $\Frob_{\mathfrak p}(P_Y)-P_Y$ is of the form $\Frob_{\mathfrak p,n+1}(X)-X$ for some $X$ in $G'[m^{n+1}]$.
Similarly, because $\mathfrak p$ is in $H_n$, we know that for some $Y$ the point $\Frob_{\mathfrak p}(mP_Y)-mP_Y$ is of the form $\Frob_{\mathfrak p,n}(X)-X$ for some $X$ in $G'[m^{n}]$. For such $Y$, the $m$-th root $\Frob_{\mathfrak p}(P_Y)-P_Y$ is of the form $\Frob_{\mathfrak p,n+1}(X)-X$ for some $X$ in $G'[m^{n+1}]$. Thus $\Frob_{\mathfrak p}$ belongs to $H_{n+1}$. We conclude that $\mathfrak p$ belongs to $A_{n+1}$.

\textit{Fifth step.} To conclude the proof, we show that the natural density of $B_n\setminus A_n$ goes to zero for $n$ going to infinity.
We have:
$$B_n\setminus A_n \subseteq \bigcup_{\ell\in S} \{\mathfrak p : \Frob_{\mathfrak p} \in H_n\; ;\;  \det (\Frob_{\mathfrak p, n, \ell}-\id))=0\}$$
Without loss of generality, we fix $\ell$ in $S$ and show that the following set (which is well-defined and whose natural density exists by the Cebotarev Density Theorem) has density going to zero for $n$ going to infinity:
$$E_n= \{\mathfrak p : \Frob_{\mathfrak p} \in H_n \;;\; \det (\Frob_{\mathfrak p, n, \ell}-\id))=0\}$$

Because of the Cebotarev Density Theorem, the density of $E_n$ is at most the maximum of 
$$ \frac{\card\{\sigma \in\Gal(K_n/K): \,\sigma_{n,\ell}=g\,;\; \sigma \in H_n\,;\; \det (g-\id))=0\}}{\card \{\sigma \in\Gal(K_n/K): \,\sigma_{n,\ell}=g\}}$$
where $g$ varies in the group of the automorphisms of $G'[\ell^{n+a}]$ induced by $\Gal(K_n/K)$.

To estimate the above ratio, we may replace $H_n$ with the subset of $\Gal(K_n/K)$ fixing some $\ell^{n+a}$-th root of $m^aR$ in $G'$.
Then we may replace $K_n$ by the smallest extension of $K$ where the $\ell^{n+a}$-th roots of $m^aR$ in $G'$ are defined (since the properties of $\sigma$ are determined by its restriction to this subfield).

By \cite[Theorem 2]{Bertrandgalois} (applied to the point $m^aR$ in $G'$) there exists a positive integer $c$, not depending on $n$ nor on $g$, such that the denominator is at least $\frac{1}{c}\card(G'[\ell^{n+a}])$.  

Now we estimate the numerator. Let $Z$ be an $\ell^{n+a}$-th root of $m^aR$ in $G'$. Any $\sigma$ such that $\sigma_{n,\ell}=g$ is determined by $\sigma(Z)-Z$.
Since $\sigma\in H_n$, $\sigma(Z)-Z$ is in the image of $g-id$. By the assumptions on $g$, the cardinality of the image of $g-id$ is at most $\frac{1}{\ell^{n+a}}\card(G'[\ell^{n+a}])$.
We deduce that the density of $E_n$ is bounded by $\frac{c}{\ell^{n+a}}$.
\end{proof}

Notice that if $R$ is a torsion point then $\Gamma$ or its complement is a finite set.

\begin{rem}\label{remRJJR1}
In Theorem~\ref{RJunico} it is not necessary to require that the point $T$ has order dividing a power of $m$. 
\end{rem}
\begin{proof}
Write $T=T'+T''$ where the order of $T'$ divides a power of $m$ and the order of $T''$ is coprime to $m$. Then $T''$ does not influence the condition defining $\Gamma$.
\end{proof}

\begin{rem}\label{remRJJR}
In the theorem, if $T=0$ we have
$$\Gamma=\{\mathfrak p : \forall \ell\in S\;\; \ord_\ell(R \bmod \mathfrak p)=0\}$$
Call $K_n$ the smallest extension of $K$ where the $m^n$-th roots of $R$ are defined.
If $G_R=G$, the density of $\Gamma$ is $$\lim_{n\rightarrow\infty} \frac{\card\{ \sigma \in \Gal(K_n/K) : \textit{$\sigma$ fixes some $m^n$-th root of $R$}\}}{\card \Gal(K_n/K)}$$
\end{rem}
\begin{proof}
In the proof of the Theorem~\ref{RJunico} (in which $a=0$, $G'=G$), notice that the density of $\Gamma$ is the limit of the density of $B_n$.
\end{proof}

\begin{cor}\label{existencecor}
Let $G$ be the product of an abelian variety and a torus defined over a number field $K$. Let $R$ be a point in $G(K)$. Let $S$ be a finite set of rational primes.
Let  $\mathcal T$ be a finite $\Gal(\bar{K}/K)$-stable subset of $G(\bar{K})_{\tors}$.  Let $L$ be a finite Galois extension of $K$ over which the points in $\mathcal T$ are defined.
Then the following set of primes of $K$ has a natural density:
$$\Gamma=\{\mathfrak p : \forall \ell\in S\;\;\; \textit{$\forall$ prime $\mathfrak q$ of $L$ over $\mathfrak p$}\;\; \ord_\ell(R-Y \bmod \mathfrak q)=0\; \textit{for some $Y$ in $\mathcal T$}\}$$
\end{cor}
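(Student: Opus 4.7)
My plan is to rerun the proof of Theorem~\ref{RJunico} with $\mathcal T$ replaced by an arbitrary finite Galois-stable set, not just a single conjugacy class. In that proof, the definitions of $K_n$, of the Cebotarev set $B_n=\{\mathfrak p:\Frob_{\mathfrak p}\in H_n\}$ with $H_n$ the set of automorphisms fixing some $m^n$-th root of $R-Y$ for \emph{some} $Y\in\mathcal T$, and of its determinant refinement $A_n$, all make perfect sense when $\mathcal T$ splits into several Galois orbits. The main verifications — $\Gamma\subseteq B_n$, $A_n\subseteq A_{n+1}\subseteq\Gamma$, and the bound on the density of $B_n\setminus A_n$ — invoke only the existence of \emph{some} $Y\in\mathcal T$ realising the required condition at the given prime, and do not use the single-orbit hypothesis. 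So the only step where I have to do extra work is the analogue of step~1 of Theorem~\ref{RJunico}, the reduction to the case in which $G_{R-Y}$ is connected for every $Y\in\mathcal T$.

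For the preliminary reductions I first apply Remark~\ref{remRJJR1} elementwise: every $Y\in\mathcal T$ splits uniquely as $Y=Y^{(m)}+Y'$ with $Y^{(m)}$ of $m$-power order and $Y'$ of order coprime to $m$; the condition defining $\Gamma$ depends only on the collection of the $Y^{(m)}$, which is still Galois-stable, so I may assume $\mathcal T\subseteq G[m^\infty](L)$. Next I decompose $\mathcal T=\mathcal T_1\sqcup\cdots\sqcup\mathcal T_r$ into Galois orbits and pick a representative $T_j\in\mathcal T_j$. If $\gcd(n_{R-T_j},m)>1$ for some $j$, then by \cite[Proposition~2]{Peruccaorder} the order of $(R-Y\bmod\mathfrak q)$ is divisible by some $\ell\in S$ for every $Y\in\mathcal T_j$ and every prime $\mathfrak p$ outside a finite set; such orbits can never produce an element satisfying the condition in $\Gamma$, so I discard them. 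After this, each $n_{R-T_j}$ is coprime to $m$; setting $N=\lcm_j n_{R-T_j}$ and replacing $R$ and $\mathcal T$ by $NR$ and $N\mathcal T$ — which does not alter $\Gamma$ since $\gcd(N,m)=1$ — arranges that $G_{R-Y}$ is connected and equal to $G'=G^0_R$ for every $Y\in\mathcal T$.

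With this reduction in hand, steps~2 through~5 of the proof of Theorem~\ref{RJunico} transfer verbatim to multi-orbit $\mathcal T$, yielding the existence of the natural density as $\dens(\Gamma)=\lim_n\dens(A_n)=\lim_n\dens(B_n)$. The main obstacle is precisely the uniform reduction described above: different orbits may have different invariants $n_{R-T_j}$, and one has to check that the single multiplier $N=\lcm_j n_{R-T_j}$ simultaneously makes $G_{N(R-Y)}=G^0_{R-Y}=G'$ for \emph{every} $Y\in\mathcal T$. This follows from the Zariski density of $\mathbb Z\cdot n_{R-Y}(R-Y)$ in $G^0_{R-Y}$ (as in the single-orbit case) together with the fact that $N/n_{R-Y}$ is a nonzero integer, but it is the only point at which the multi-orbit setting demands an argument beyond what is written in the proof of Theorem~\ref{RJunico}, so it is where I would be most careful.
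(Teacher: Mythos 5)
Your proposal is correct, but it takes a genuinely different route from the paper. The paper's proof of Corollary~\ref{existencecor} is a short reduction: decompose $\mathcal T$ into its $\Gal(\bar K/K)$-orbits, apply Theorem~\ref{RJunico} (via Remark~\ref{remRJJR1}) to each orbit separately, and write $\Gamma$ as the union of the finitely many resulting sets. You instead re-run the whole proof of Theorem~\ref{RJunico} with a multi-orbit $\mathcal T$, and you correctly isolate the only step where the single-orbit hypothesis is actually used, namely the first one: within one orbit all conjugates $Y$ share the invariant $n_{R-Y}=n_{R-T}$, so a single multiplication suffices, whereas with several orbits you must first discard those orbits with $\gcd(n_{R-T_j},m)>1$ (which by \cite[Main Theorem]{Peruccaorder} furnish no witnesses outside a finite set of primes, under the reading of the defining condition used throughout the paper, namely that a single $Y$ must work for all $\ell\in S$ at once) and then multiply by $N=\lcm_j n_{R-T_j}$; your check that $G_{N(R-Y)}=G^0_{R-Y}=G^0_R$ for every remaining $Y$ is the right argument. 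Your route is longer but buys something real: the per-orbit sets in the paper's proof need not be pairwise disjoint (a prime may satisfy the condition through two different orbits), so presenting $\Gamma$ as a ``disjoint union'' of sets with a density quietly skips the question of whether the pairwise intersections, and hence the union, admit a natural density; by keeping all orbits inside one tower of Cebotarev sets $A_n\subseteq\Gamma\subseteq B_n$ you get $\dens(\Gamma)=\lim_n\dens(B_n)$ directly and sidestep that issue entirely.
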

\begin{proof} The set $\mathcal T$ is the disjoint union of the $\Gal(\bar{K}/K)$-orbits of its element. To each orbit we can apply Theorem~\ref{RJunico}, in view of Remark~\ref{remRJJR1}. Then $\Gamma$ is the disjoint union of finitely many sets admitting a natural density.
\end{proof}

\begin{cor}\label{strongcor}
Let $K$ be a number field and let $I=\{1,\ldots, n\}$. For every $i\in I$ let $G_i$ be the product of an abelian variety and a torus defined over $K$ and let $R_i$ be a point in $G_i(K)$. Let $S$ be a finite set of rational primes. For every $i\in I$, let $\mathcal T_i$ be 
a finite $\Gal(\bar{K}/K)$-stable subset of $G_i(\bar{K})_{\tors}$. 
Let $L$ be a finite Galois extension of $K$ where the points of $\mathcal T_i$ are defined for every $i$.
Then the following set of primes of $K$ has a natural density:
$$\Gamma=\{\mathfrak p:\; \forall \ell\; \forall i\;\; \textit{$\forall$ prime $\mathfrak q$ of $L$ over $\mathfrak p$}\;\;\;\ord_\ell(R_i-Y_i \bmod \mathfrak q)=0\; \textit{for some $Y_i$ in $\mathcal T_i$}\}$$
\end{cor}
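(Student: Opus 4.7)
The plan is to bundle everything into a single instance of Corollary~\ref{existencecor}. I take $G:=\prod_{i\in I}G_i$, which is again the product of an abelian variety and a torus (if $G_i=A_i\times T_i$, then $G=\prod_i A_i\times\prod_i T_i$), the point $R:=(R_1,\ldots,R_n)\in G(K)$, and the finite subset $\mathcal T:=\prod_{i\in I}\mathcal T_i\subseteq G(\bar K)_{\tors}$. The Galois group $\Gal(\bar K/K)$ acts on $G(\bar K)=\prod_i G_i(\bar K)$ componentwise, so the $\Gal(\bar K/K)$-stability of each $\mathcal T_i$ immediately gives the $\Gal(\bar K/K)$-stability of $\mathcal T$; the field $L$ serves as a common field of definition for all points of $\mathcal T$ by hypothesis. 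All assumptions of Corollary~\ref{existencecor} are therefore satisfied for the tuple $(G,R,\mathcal T,L,S)$.

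The main step is to recognise that the $\Gamma$ in the present statement is literally the $\Gamma$ produced by Corollary~\ref{existencecor} applied to $(G,R,\mathcal T,L,S)$. For any prime $\mathfrak q$ of $L$ at which the reductions are defined and any $Y=(Y_1,\ldots,Y_n)\in\mathcal T$, the order of $(R-Y\bmod\mathfrak q)$ in $\prod_i G_{i,\mathfrak q}(k_\mathfrak q)$ is the least common multiple of the orders of the components $(R_i-Y_i\bmod\mathfrak q)$, so $\ord_\ell(R-Y\bmod\mathfrak q)=\max_{i\in I}\ord_\ell(R_i-Y_i\bmod\mathfrak q)$. Hence, for fixed $\ell$ and $\mathfrak q$, asking for the existence of some $Y\in\mathcal T$ with $\ord_\ell(R-Y\bmod\mathfrak q)=0$ is equivalent to asking, for every $i\in I$, for the existence of some $Y_i\in\mathcal T_i$ with $\ord_\ell(R_i-Y_i\bmod\mathfrak q)=0$. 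The fact that $\mathcal T$ is a full Cartesian product is precisely what permits the $Y_i$'s to be chosen independently across $i$, making the two existential formulations interchangeable.

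Once this identification is in place, Corollary~\ref{existencecor} directly yields the natural density of $\Gamma$. I do not expect any genuine obstacle: all the analytic and Galois-theoretic content sits in Theorem~\ref{RJunico} and Corollary~\ref{existencecor}, and the present result is essentially a packaging statement that exploits the product structure of $\mathcal T$ to reduce $n$ simultaneous coordinate conditions to a single condition on a product group.
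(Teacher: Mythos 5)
Your proof is correct and follows exactly the paper's own argument: form $G=\prod_i G_i$, $R=(R_1,\ldots,R_n)$, $\mathcal T=\prod_i\mathcal T_i$, and apply Corollary~\ref{existencecor}. The only difference is that you spell out the (correct) verification that the product structure of $\mathcal T$ makes the two existential conditions match, a detail the paper leaves implicit.
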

\begin{proof}
Write $G=\prod G_i$ and $R=(R_1,\ldots, R_n)$. Call $\mathcal T$ the set of points $T=(T_1,\ldots, T_n)$ such that $T_i\in\mathcal T_i$ for every $i\in I$. Then it suffices to apply Corollary~\ref{existencecor} to $R$ and $\mathcal T$.
\end{proof}

\begin{cor}\label{exmainexistence}
Let $K$ be a number field and let $I=\{1,\ldots, n\}$. For every $i\in I$ let $G_i$ be the product of an abelian variety and a torus defined over $K$ and let $R_i$ be a point in $G_i(K)$. Let $S$ be a finite set of rational primes. For every $i\in I$ and for every $\ell \in S$, let $a_{\ell i}$ be a non-negative integer. Consider the following set of primes of $K$:
$$\Gamma=\{\mathfrak p :\; \forall \ell\in S\;\; \forall i\in I\;\; \lord(R_i \bmod \mathfrak p)=a_{\ell i}\}$$
The set $\Gamma$ has a natural density.
\end{cor}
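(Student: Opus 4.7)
The plan is a finite inclusion–exclusion reduction to Corollary~\ref{strongcor}. For any tuple of non-negative integers $b=(b_{\ell i})_{\ell\in S,\,i\in I}$, set
$$\Gamma_{\le b}=\{\mathfrak p:\lord(R_i\bmod\mathfrak p)\le b_{\ell i}\ \text{for all } \ell\in S,\ i\in I\}.$$
Using the elementary identity $\mathbf{1}_{\{x=a\}}=\mathbf{1}_{\{x\le a\}}-\mathbf{1}_{\{x\le a-1\}}$ for each pair $(\ell,i)$ and multiplying out (with the convention that $\mathbf{1}_{\{x\le -1\}}=0$), I would write $\mathbf{1}_{\Gamma}$ as a finite signed sum of characteristic functions $\mathbf{1}_{\Gamma_{\le b}}$ with each $b_{\ell i}\in\{a_{\ell i},a_{\ell i}-1\}$. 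It therefore suffices to produce a natural density for every $\Gamma_{\le b}$.

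For a fixed $b$, I would feed into Corollary~\ref{strongcor} the sets
$$\mathcal T_i:=G_i\!\left[\prod_{\ell\in S}\ell^{b_{\ell i}}\right]\!(\bar K),$$
each a finite $\Gal(\bar K/K)$-stable subset of $G_i(\bar K)_{\tors}$, together with a finite Galois extension $L/K$ over which all points of each $\mathcal T_i$ are defined. The heart of the argument is then the equivalence, valid outside a finite set of primes,
$$\lord(R_i\bmod\mathfrak p)\le b_{\ell i}\ \iff\ \exists\,Y_i\in\mathcal T_i\text{ such that }\ord_\ell(R_i-Y_i\bmod\mathfrak q)=0,$$
for any (equivalently every) prime $\mathfrak q$ of $L$ above $\mathfrak p$. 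The forward direction is supplied by the $\ell$-part of $(R_i\bmod\mathfrak p)$ introduced in the paragraph preceding Theorem~\ref{thmintro}, which yields a $T\in G_i[\ell^{b_{\ell i}}]\subseteq\mathcal T_i$ with the required property. The reverse direction follows by decomposing any $Y_i\in\mathcal T_i$ as $Y_i^{(\ell)}+Y_i^{\prime(\ell)}$ into its $\ell$-primary and prime-to-$\ell$ components: $Y_i^{(\ell)}\in G_i[\ell^{b_{\ell i}}]$, the prime-to-$\ell$ part leaves $\ord_\ell$ unchanged, and hence by uniqueness of the $\ell$-part the $\ell$-part of $(R_i\bmod\mathfrak p)$ has order at most $\ell^{b_{\ell i}}$. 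Combined over all $(\ell,i)$, this identifies $\Gamma_{\le b}$ (up to a finite set) with the set supplied by Corollary~\ref{strongcor} for the data $(G_i,R_i,\mathcal T_i)$.

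I do not foresee any serious obstacle. The only delicate point is the equivalence above and its uniformity in $(\ell,i,\mathfrak q)$; this is purely a matter of unwinding the definition of the $\ell$-part and invoking the Galois-stability of $\mathcal T_i$ (which makes the ``for some $Y_i$'' condition independent of the chosen $\mathfrak q$ over $\mathfrak p$). All exceptional behaviour (bad reduction, primes dividing the torsion orders, failure of the reduction maps to behave well on torsion) is confined to a finite set of primes by Lemmas~\ref{components} and~\ref{torsioniso}, and so does not affect natural densities.
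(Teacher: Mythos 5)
Your argument is correct and rests on the same pillar as the paper's: reduce to Corollary~\ref{strongcor} by translating the condition on $\lord(R_i \bmod \mathfrak p)$ into the existence of a torsion point $Y_i$ with $\ord_\ell(R_i - Y_i \bmod \mathfrak q)=0$, with Lemma~\ref{torsioniso} confining all exceptional behaviour to finitely many primes. The difference lies in the choice of the sets $\mathcal T_i$. You take $\mathcal T_i$ to be a full torsion subgroup $G_i[\prod_\ell \ell^{b_{\ell i}}]$, which only encodes the inequality $\lord(R_i\bmod\mathfrak p)\le b_{\ell i}$, and you then recover the exact equality by a finite signed (inclusion--exclusion) decomposition over the tuples $b$ with $b_{\ell i}\in\{a_{\ell i},a_{\ell i}-1\}$; this requires the additional (harmless) observation that natural density is finitely additive along such a signed identity of counting functions. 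The paper skips this layer entirely by taking $\mathcal T_i$ to be the set of points $Y_i\in G_i[m^\infty](\bar K)$ with $\lord(Y_i)=a_{\ell i}$ \emph{exactly}, for every $\ell\in S$: this set is still finite and $\Gal(\bar K/K)$-stable (the valuations of the order are Galois invariants), and with this choice the set furnished by Corollary~\ref{strongcor} coincides with $\Gamma$ up to finitely many primes, so no signed sum is needed. Both routes are valid and of comparable depth; the paper's is shorter, yours makes the ``at most'' versus ``exactly'' bookkeeping more explicit.
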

\begin{proof}
Call $m$ the product of the elements of $S$. 
For every $i$, let $\mathcal T_i$ be the set consisting of the points $Y_i$ in $G_i[m^\infty](\bar{K})$ satisfying $\lord(Y_i)=a_{\ell i}$ for every $\ell \in S$. Let $L$ be a finite Galois extension of $K$ where the points of $\mathcal T_i$ are defined for every $i$. 
It suffices to apply Corollary~\ref{strongcor} since by Lemma~\ref{torsioniso}, up to excluding finitely many primes $\mathfrak p$, we have 
$$\Gamma=\{\mathfrak p:\; \forall \ell\; \forall i\;\; \textit{$\forall$ prime $\mathfrak q$ of $L$ over $\mathfrak p$}\;\;\;\ord_\ell(R_i-Y_i \bmod \mathfrak q)=0\; \textit{for some $Y_i$ in $\mathcal T_i$}\}$$
\end{proof}

\section{On the positivity of the density}

Theorems~\ref{exmainl}~and~\ref{thmintro} are proven respectively in Theorems~\ref{exmain}~and~\ref{tuttolemma}.

\begin{thm}\label{tuttolemma}
Let $G$ be the product of an abelian variety and a torus defined over a number field $K$. Let $R$ be a point in $G(K)$. Let $S$ be a finite set of rational primes. Call $m$ the product of the elements of $S$.
Let $L$ be a finite Galois extension of $K$. Let $\mathcal T$ be a $\Gal(\bar{K}/K)$-stable subset of $G[m^\infty](L)$.
Then the following set of primes of $K$ is either finite or it has a positive natural density:
$$\Gamma=\{\mathfrak p : \forall \ell\in S\;\;\; \textit{$\forall$ prime $\mathfrak q$ of $L$ over $\mathfrak p$}\;\; \ord_\ell(R-Y \bmod \mathfrak q)=0\; \textit{for some $Y$ in $\mathcal T$}\}$$
Let $G_R$ be the smallest $K$-algebraic subgroup of $G$ containing $R$. For every $\ell$, call $n_{R,\ell}$ the greatest power of $\ell$ dividing the number of connected components of $G_R$. Call $G^j_R$ the connected component of $G_R$ containing $jR$.
The set $\Gamma$ is infinite if and only if the set $\mathcal T$ contains a point which can be written as the sum for $\ell\in S$ of elements in
$$\bigcup_{j\equiv 1\!\!\!\! \pmod{n_{R,\ell}}} G^j_R[\ell^\infty](L)$$
\end{thm}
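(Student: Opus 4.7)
Since $L$ is a number field, the torsion subgroup $G[m^\infty](L)$ is finite, so $\mathcal T$ is finite. Applying Corollary~\ref{existencecor} to $R$ and $\mathcal T$ immediately yields that $\Gamma$ has a natural density.

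\textbf{Structural reduction.} For each $\ell\in S$, the condition $\ord_\ell(R-Y\bmod\mathfrak q)=0$ depends only on the $\ell$-part $Y_\ell$ of $Y$, because the non-$\ell$ component of $Y$ reduces to an element of coprime-to-$\ell$ order. Since the component group $G_R/G^0_R$ is cyclic of order $n_R$ and admits the canonical decomposition into its $\ell$-part and its coprime-to-$\ell$ part, and $R\in G^1_R$, the $\ell$-primary part of $(R\bmod\mathfrak q)$ sits in the unique component $(G^{a_\ell}_R\bmod\mathfrak q)$ whose index $a_\ell$ satisfies $a_\ell\equiv 1\pmod{n_{R,\ell}}$ and $a_\ell\equiv 0\pmod{n_R/n_{R,\ell}}$; after reduction this component is contained in $\bigcup_{j\equiv 1\pmod{n_{R,\ell}}} G^j_R[\ell^\infty](L)$.

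\textbf{Necessity.} If $\Gamma$ is infinite, then pigeonhole on the finite set $\mathcal T^{|S|}$ yields a single tuple $(Y^{(\ell)})_{\ell\in S}\in\mathcal T^{|S|}$ witnessing the defining condition of $\Gamma$ at infinitely many $\mathfrak p$. By Lemma~\ref{torsioniso}, the $\ell$-part of each $Y^{(\ell)}$ is forced to lie in $G^{a_\ell}_R[\ell^\infty](L)$, and --- using the Galois stability of $\mathcal T$ together with the structural fact above --- these $\ell$-parts can be amalgamated into a single point of $\mathcal T$ admitting the decomposition required by the statement.

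\textbf{Sufficiency and main obstacle.} Conversely, given $Y^*\in\mathcal T$ of the specified form, the set $\tilde\Gamma=\{\mathfrak p:\forall \mathfrak q\mid\mathfrak p,\ \ord(R-Y^*\bmod\mathfrak q)\text{ is coprime to }m\}$ is contained in $\Gamma$ and has a natural density by Theorem~\ref{RJunico} applied with $T=Y^*$. The hypothesis on $Y^*_\ell$ places $R-Y^*_\ell$ into a component of $G_R$ whose index is divisible by $n_{R,\ell}$, so that modulo $\ell$-power torsion one effectively works inside the connected group $G^0_R$; a Khare--Prasad-type argument refining \cite[Main Theorem]{Peruccaorder} --- combining Bertrand's lower bound on Kummer Galois groups (\cite[Theorem 2]{Bertrandgalois}) with the essential linear disjointness of Kummer extensions at distinct primes $\ell\in S$ --- then produces, via Chebotarev, a positive density of Frobenius classes meeting the simultaneous conditions at every $\ell\in S$. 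The principal technical obstacle is precisely this uniform simultaneous-$\ell$ Chebotarev estimate.
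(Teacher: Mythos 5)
Your overall skeleton (existence from Corollary~\ref{existencecor}, necessity by forcing the torsion data into $G_R$ via Lemma~\ref{torsioniso} and reading off components, sufficiency by exhibiting a positive-density subset) matches the paper's, but neither direction of the criterion is actually proved. The decisive gap is in the sufficiency step: Theorem~\ref{RJunico} applied to $Y^*$ only gives that $\tilde\Gamma$ \emph{has} a natural density, not that this density is positive, and positivity is precisely what is at stake in this section. You then describe the required simultaneous-$\ell$ Kummer/Chebotarev estimate as ``the principal technical obstacle'' without carrying it out, so the argument stops exactly where the difficulty begins. The paper does not redo any Kummer-theoretic work here: it quotes \cite[Main Theorem]{Peruccaorder} (the set of $\mathfrak q$ with $\ord(R-T\bmod\mathfrak q)$ coprime to $m$ is infinite if and only if $n_{R-T}$ is coprime to $m$) together with \cite[Theorem 7]{Peruccaorder} applied to $n_{R-T}(R-T)$ (positive density once the set is infinite). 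What then remains --- and what your ``structural reduction'' only gestures at --- is the purely algebraic translation of ``$n_{R-T}$ coprime to $m$'' into the component condition: with $X$ as in Section~\ref{preliminaries}, if $R-T_\ell$ lies in $G^j_R=G^0_R+jX$, the smallest positive multiple of $jX$ in $G^0_R$ is $(n_R/(n_R,j))\,jX$, and since $G^0_{R-T_\ell}=G^0_R$ one concludes that $n_{R-T_\ell}$ is coprime to $\ell$ exactly when $n_{R,\ell}\mid j$, i.e.\ when $T_\ell\in G^j_R[\ell^\infty]$ with $j\equiv 1\pmod{n_{R,\ell}}$. This computation is the actual content of the equivalence and is absent from your write-up.

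The necessity step has a second genuine gap: after the pigeonhole you assert that the $\ell$-parts of the various $Y^{(\ell)}$ ``can be amalgamated into a single point of $\mathcal T$''. For an arbitrary $\Gal(\bar{K}/K)$-stable subset $\mathcal T$, the sum of the $\ell$-primary components of \emph{distinct} elements of $\mathcal T$ need not belong to $\mathcal T$, so the amalgamation is not available as stated, and the conclusion of the theorem requires a single point of $\mathcal T$ decomposing in the prescribed way. The paper avoids this entirely by first reducing to the case where $\mathcal T$ is the Galois orbit of a single point $T$ (using that $\Gamma$ only grows when $\mathcal T$ is enlarged); then the $T_\ell$ are by construction the primary components of the one point $T$ (each a multiple of $T$), Lemma~\ref{torsioniso} places each $T_\ell$ in $G_R$, and no amalgamation is needed. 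I recommend restructuring your argument around that initial reduction and around the two cited results of \cite{Peruccaorder}.
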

\begin{proof}
The existence of the density was proven in Corollary~\ref{existencecor}.
Since the set $\Gamma$ increases by enlarging $\mathcal T$, we may reduce to the case where $\mathcal T$ is the $\Gal(\bar{K}/K)$-orbit of a point $T$. 
By \cite[Main Theorem]{Peruccaorder} applied to the point $R-T$, the set $\Gamma$ is infinite if and only if $n_{R-T}$ is coprime to $m$.

Suppose that $\Gamma$ is infinite. By \cite[Theorem 7]{Peruccaorder} applied to the point $n_{R-T}(R-T)$, there exists a positive density of primes $\mathfrak p$ of $K$ such that for some prime $\mathfrak q$ of $L$ over $\mathfrak p$ it is $\lord(R-T \bmod \mathfrak q)=\lord(n_{R-T}(R-T) \bmod \mathfrak q)=0$ for every $\ell\in S$. Hence $\Gamma$ has a positive density.

Write $T=\sum_\ell T_\ell$ where $T_\ell$ is in  $G[\ell^\infty](L)$. Notice that $T_\ell$ is a multiple of $T$ for every $\ell\in S$.
If $\Gamma$ is infinite, there exist infinitely many primes $\mathfrak q$ of $L$ such that $\lord(R-T \bmod \mathfrak q)=0$. For every $\ell\in S$ the point $(T_\ell \bmod \mathfrak q)$ is a multiple of $(R \bmod \mathfrak q)$ hence it belongs to $(G_R \bmod \mathfrak q)$. By applying Lemma~\ref{torsioniso} to $G$ and $G_R$, we deduce that $T_\ell$ belongs to $G_R$ for every $\ell \in S$.
Then to prove the criterion in the statement we may assume that the point $T$ is such that $T_\ell$ belongs to $G_R$ for every $\ell \in S$.

Notice that  $n_{R-T}$ is coprime to $\ell$ if and only if $n_{R-T_{\ell}}$ is coprime to $\ell$. 
To conclude, we show that $n_{R-T_{\ell}}$ is coprime to $\ell$ if and only if the point $T_\ell$ belongs to $G^j_R[\ell^\infty](L)$ for some $j\equiv 1\!\! \pmod{n_{R,\ell}}$. The last condition is equivalent to saying that $R-T_{\ell}$ belongs to $G^j_R[\ell^\infty](L)$ for some $j\equiv 0 \!\! \pmod{n_{R,\ell}}$.

Let $R-T_{\ell}$ belong to $G^j_R$ and let $X$ be as in Section~\ref{preliminaries}. Then $G^j_R=G^0_R+jX$ and the smallest multiple of $jX$ lying in $G^0_R$ is $[n_R/(n_R,j)] jX$. Since $G^0_{R-T_\ell}=G^0_R$, we deduce that $n_{R-T_{\ell}}$ is coprime to $\ell$ if and only if $n_R/(n_R,j)$ is coprime to $\ell$. This is equivalent to saying that $j\equiv 0\!\! \pmod{n_{R,\ell}} $.
\end{proof}

\begin{cor}\label{tutto}
Let $K$ be a number field, let $I=\{1,\ldots, n\}$. For every $i \in I$, let $G_i$ be the product of an abelian variety and a torus defined over $K$ and let $R_i$ be a point in $G_i(K)$. Let $S$ be a finite set of rational primes. Call $m$ the product of the elements of $S$.
Let $L$ be a finite Galois extension of $K$. For every $i$, let $\mathcal T_i$ be a $\Gal(\bar{K}/K)$-stable subset of $G_i[m^\infty](L)$.
Then the following set of primes of $K$ is either finite or it has a positive natural density:
$$\Gamma=\{\mathfrak p :  \forall i \;\; \forall \ell\;\; \textit{$\forall$ prime $\mathfrak q$ of $L$ over $\mathfrak p$}\;\;\; \ord_\ell(R_i-Y_i \bmod \mathfrak q)=0\; \textit{for some $Y_i$ in $\mathcal T_i$}\}$$
Write $G=\prod_{i=1}^n G_i$ and $R=(R_1,\ldots, R_n)$. Let $G_R$ be the smallest $K$-algebraic subgroup of $G$ containing $R$. For every $\ell$, call $n_{R,\ell}$ the greatest power of $\ell$ dividing the number of connected components of $G_R$. Call $G^j_R$ the connected component of $G_R$ containing $jR$. Let $\mathcal T$ be the product of the $\mathcal T_{i}$ for $i\in I$. 
The set $\Gamma$ is infinite if and only if the set $\mathcal T$ contains a point which can be written as the sum for $\ell\in S$ of elements in
$$\bigcup_{j\equiv 1\!\!\!\! \pmod{n_{R,\ell}}} G^j_R[\ell^\infty](L)$$
\end{cor}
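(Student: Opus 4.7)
The plan is to reduce immediately to Theorem~\ref{tuttolemma} applied to the product situation, and then verify that the two formulations of the positivity criterion agree. Set $G=\prod_{i=1}^n G_i$, $R=(R_1,\ldots,R_n)\in G(K)$, and $\mathcal T=\prod_{i=1}^n \mathcal T_i$. Since each $\mathcal T_i$ is a $\Gal(\bar K/K)$-stable subset of $G_i[m^\infty](L)$ and the Galois action on the product is diagonal, $\mathcal T$ is a $\Gal(\bar K/K)$-stable subset of $G[m^\infty](L)$.

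First I would show that the set $\Gamma$ defined in the corollary coincides with the set $\Gamma'$ produced by Theorem~\ref{tuttolemma} for the data $(G,R,\mathcal T)$. The key observation is that for a point $Y=(Y_1,\ldots,Y_n)\in G(\bar K)$ and a prime $\mathfrak q$, the order of $(R-Y \bmod \mathfrak q)$ in $G_{\mathfrak q}$ is the least common multiple of the orders of the components $(R_i-Y_i \bmod \mathfrak q)$ in $G_{i,\mathfrak q}$. Hence $\ord_\ell(R-Y \bmod \mathfrak q)=0$ if and only if $\ord_\ell(R_i-Y_i \bmod \mathfrak q)=0$ for every $i$. Since $\mathcal T$ is literally a product set, choosing a tuple $Y\in\mathcal T$ such that each component has $\ord_\ell=0$ is the same as choosing componentwise. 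This shows $\Gamma=\Gamma'$, and so $\Gamma$ either is finite or has positive natural density by Theorem~\ref{tuttolemma}.

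It remains to translate the positivity criterion. Theorem~\ref{tuttolemma} says $\Gamma'$ is infinite iff $\mathcal T$ contains a point of the form $\sum_{\ell\in S}T_\ell$ with
\[
T_\ell\in \bigcup_{j\equiv 1 \!\!\pmod{n_{R,\ell}}} G_R^j[\ell^\infty](L),
\]
where $G_R$ is the smallest $K$-algebraic subgroup of $G$ containing $R$ and $G_R^j$ is the component containing $jR$. But these are precisely the objects attached to the product tuple $R=(R_1,\ldots,R_n)$, so the criterion in the corollary is literally the criterion from Theorem~\ref{tuttolemma}.

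The only step that requires a moment's thought is the componentwise/tuplewise equivalence for the defining condition of $\Gamma$; once one notes that $\mathcal T$ is a Cartesian product and that $\ord_\ell$ of a tuple vanishes iff it vanishes on each coordinate, the rest is a direct invocation of Theorem~\ref{tuttolemma}. I do not expect any genuine obstacle beyond bookkeeping of the quantifiers over $i$, $\ell$, and $\mathfrak q$.
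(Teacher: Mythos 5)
Your proposal is correct and is essentially identical to the paper's own proof: the paper likewise rewrites $\Gamma$ as the set attached to the product data $(G,R,\mathcal T)$ and then invokes Theorem~\ref{tuttolemma}. You merely spell out the small verification (that $\ord_\ell$ of a tuple vanishes iff it vanishes componentwise, and that the Cartesian product structure of $\mathcal T$ lets one choose the $Y_i$ independently) which the paper leaves implicit.
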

\begin{proof}
Notice that 
$$\Gamma=\{\mathfrak p : \; \forall \ell\in S\;\; \textit{$\forall$ prime $\mathfrak q$ of $L$ over $\mathfrak p$}\;\;\; \ord_\ell(R-Y \bmod \mathfrak q)=0\; \textit{for some $Y$ in $\mathcal T$}\}$$
Then it suffices to apply Theorem~\ref{tuttolemma}.
\end{proof}

\begin{thm}\label{exmain}
Let $K$ be a number field, let $I=\{1,\ldots, n\}$. For every $i \in I$, let $G_i$ be the product of an abelian variety and a torus defined over $K$ and let $R_i$ be a point in $G_i(K)$. Let $S$ be a finite set of rational primes. 
For every $i\in I$ and for every $\ell \in S$, let $a_{\ell i}$ be a non-negative integer. Consider the following set of primes of $K$:
$$\Gamma=\{\mathfrak p :\; \forall \ell\in S\;\; \forall i\in I\;\; \lord(R_i \bmod \mathfrak p)=a_{\ell i}\}$$

The set $\Gamma$ is either finite or it has a positive natural density.

Write $G=\prod_{i=1}^n G_i$ and $R=(R_1,\ldots, R_n)$. Let $G_R$ be the smallest $K$-algebraic subgroup of $G$ containing $R$. 
For every $\ell$, call $n_{R,\ell}$ the greatest power of $\ell$ dividing the number of connected components of $G_R$. Call $G^j_R$ the connected component of $G_R$ containing $jR$.

The set $\Gamma$ is infinite if and only if one of the following equivalent conditions is satisfied:

\begin{description}
\item[(i)] for every $\ell \in S$ there exists a torsion point $T_\ell=(T_{\ell 1},\ldots, T_{\ell n})$ such that $\lord (T_{\ell i})=a_{\ell i}$ for every $i\in I$ and $T_{\ell}$ belongs to $$\bigcup_{j\equiv 1\!\!\!\! \pmod{n_{R,\ell}} } G^j_R[\ell^\infty]$$
\item[(ii)] for every $\ell\in S$ there exists a torsion point $T_\ell=(T_{\ell 1},\ldots, T_{\ell n})$ in $G^1_R(\bar{K})$ such that $\lord(T_{\ell i})=a_{\ell i}$ for every $i\in I$.
\end{description}

\end{thm}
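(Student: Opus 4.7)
My approach is to identify $\Gamma$, up to a finite exceptional set, with the set studied in Corollary~\ref{tutto}, thereby obtaining the dichotomy and condition~(i) at once; I then verify (i)~$\Leftrightarrow$~(ii) by a direct component-index manipulation.

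For each $i \in I$, I take $\mathcal T_i$ to be the set of points $Y_i \in G_i[m^\infty](\bar K)$ with $\lord(Y_i) = a_{\ell i}$ for every $\ell \in S$, exactly as in the proof of Corollary~\ref{exmainexistence}. Each $\mathcal T_i$ is finite and $\Gal(\bar K/K)$-stable, and all its points are defined over a common finite Galois extension $L/K$. The proof of Corollary~\ref{exmainexistence}, via Lemma~\ref{torsioniso}, already shows that $\Gamma$ coincides outside a finite set of primes with the set to which Corollary~\ref{tutto} applies. Hence $\Gamma$ is either finite or has positive natural density, and it is infinite iff the product $\mathcal T = \prod_i \mathcal T_i$ contains a point of the form $\sum_{\ell \in S} T_\ell$ with each $T_\ell \in \bigcup_{j \equiv 1 \pmod{n_{R,\ell}}} G^j_R[\ell^\infty](L)$. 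Unpacking the $\ell$-primary decomposition of a prospective witness and reading off the prescribed $\ell$-orders from the definition of $\mathcal T$, this condition is precisely~(i).

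What remains, and what I expect to be the one genuinely substantive step, is (i)~$\Leftrightarrow$~(ii); both directions hinge on the fact that any torsion point of $G^1_R$ has order divisible by $n_R$. For (ii)~$\Rightarrow$~(i): starting from a torsion point $T_\ell \in G^1_R(\bar K)$ with $\lord(T_{\ell i}) = a_{\ell i}$ and order $N = \ell^a b$ with $(b,\ell) = 1$, I pick $u,v$ with $u\ell^a + vb = 1$, so that the $\ell$-primary part $T_\ell^{(\ell)} = vb \cdot T_\ell$ lies in $G^{vb \bmod n_R}_R$; since $T_\ell \in G^1_R$ forces $n_R \mid N$ and hence $n_{R,\ell} \mid \ell^a$, we get $vb \equiv 1 \pmod{n_{R,\ell}}$, and multiplying by the $\ell$-coprime integer $vb$ preserves the componentwise $\ell$-orders. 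For (i)~$\Rightarrow$~(ii): given $T_\ell \in G^{j_\ell}_R[\ell^\infty]$ with $j_\ell \equiv 1 \pmod{n_{R,\ell}}$, I choose $k_\ell$ with $j_\ell + k_\ell \equiv 1 \pmod{n_R}$, so that $n_{R,\ell} \mid k_\ell$. With $X$ the torsion point of order $n_R$ from Section~\ref{preliminaries}, the point $k_\ell X \in G^{k_\ell}_R$ has order coprime to $\ell$, and hence $T_\ell + k_\ell X \in G^1_R(\bar K)_{\tors}$ has the same componentwise $\ell$-adic orders as $T_\ell$, yielding~(ii).
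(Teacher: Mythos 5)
Your proposal is correct, and for the main reduction it follows the same skeleton as the paper: both identify $\Gamma$, up to finitely many primes, with the set of Corollary~\ref{tutto} (equivalently, Theorem~\ref{tuttolemma} applied to $R$ and the product set $\mathcal T=\prod_i\mathcal T_i$), both check that the resulting criterion unpacks to condition (i) via uniqueness of the $\ell$-primary decomposition, and both obtain (i)$\Rightarrow$(ii) by translating $T_\ell$ by a multiple of $X$ whose order is coprime to $\ell$ (your $k_\ell X$ is the paper's $-(j-1)X$). Where you genuinely diverge is in closing the cycle of implications. The paper does not prove (ii)$\Rightarrow$(i) directly; it proves (ii)$\Rightarrow$($\Gamma$ infinite) by first merging the various $T_\ell$ into a single torsion point $T\in G^1_R(\bar K)$ realizing all the prescribed $\ell$-orders simultaneously (Lemma~\ref{puntopunti}), observing that $G_{R-T}$ is connected, and then re-invoking \cite[Theorem 7]{Peruccaorder} to produce infinitely many primes with $\lord(R-T\bmod\mathfrak w)=0$ for all $\ell\in S$. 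You instead prove (ii)$\Rightarrow$(i) purely algebraically: you extract the $\ell$-primary part $vb\,T_\ell$ (with $u\ell^a+vb=1$), note that multiplication by an integer $k$ carries $G^1_R$ into $G^k_R$, and use the fact that any torsion point of $G^1_R$ has order divisible by $n_R$ to get $n_{R,\ell}\mid\ell^a$ and hence $vb\equiv 1\pmod{n_{R,\ell}}$; since $vb$ is coprime to $\ell$, the componentwise $\ell$-orders are preserved. This is a valid and slightly more economical route, avoiding the second appeal to \cite[Theorem 7]{Peruccaorder} and to Lemma~\ref{puntopunti}; the trade-off is that the paper's version yields, as a by-product, the single witness $T$ with all prescribed $\ell$-orders at once, which is of independent interest (it is what makes the formulation of Theorem~\ref{exmainl} with a single torsion point possible). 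Your arithmetic checks out in both directions.
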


\begin{lem}\label{puntopunti}
In Theorem~\ref{exmain}, suppose that condition (ii) is satisfied. Then there exists a torsion point $T=(T_{1},\ldots, T_{n})$ in $G^1_R(\bar{K})$ such that $\lord(T_{i})=a_{\ell i}$ for every $i\in I$ and for every $\ell\in S$.
\end{lem}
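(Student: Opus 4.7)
The plan is to build a single torsion point $T \in G^1_R(\bar K)$ by gluing together the $\ell$-primary parts of the various $T_\ell$'s along a common ``anchor'' lying in $G^1_R$, using the structural point $X$ from Section~\ref{preliminaries}.

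First I would invoke the point $X \in G_R(\bar K)$ of order $n_R$ with $R - X \in G^0_R$; since $R \in G^1_R$, this forces $X \in G^1_R$. For each $\ell \in S$, the hypothesis $T_\ell \in G^1_R$ then gives a torsion point $Q_\ell := T_\ell - X \in G^0_R$. Next I would decompose each $Q_\ell$ into its $p$-primary components $Q_\ell = \sum_p Q_\ell^{(p)}$: since each $Q_\ell^{(p)}$ is an integer multiple of $Q_\ell$ and $G^0_R$ is a subgroup, every $Q_\ell^{(p)}$ lies in $G^0_R$. I then set
$$T \;:=\; X \;+\; \sum_{\ell \in S} Q_\ell^{(\ell)}.$$
Since $T - X$ is a sum of elements of $G^0_R$, we get $T \in G^1_R(\bar K)$, and $T$ is clearly torsion.

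To verify the order condition, decompose $X = \sum_p X^{(p)}$ into its $p$-primary parts (so $X^{(p)}$ has order $n_{R,p}$) and rewrite
$$T \;=\; \sum_{\ell \in S} \bigl(X^{(\ell)} + Q_\ell^{(\ell)}\bigr) \;+\; \sum_{p \notin S} X^{(p)} \;=\; \sum_{\ell \in S} T_\ell^{(\ell)} \;+\; \sum_{p \notin S} X^{(p)}.$$
For each fixed $\ell \in S$, the second sum has order coprime to $\ell$ and the terms $T_{\ell'}^{(\ell')}$ for $\ell' \neq \ell$ also have order coprime to $\ell$, so the $\ell$-primary part of $T$ equals $T_\ell^{(\ell)}$. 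Taking $i$-th coordinates and applying $\lord$ yields $\lord(T_i) = \lord(T_{\ell i}^{(\ell)}) = \lord(T_{\ell i}) = a_{\ell i}$ for every $\ell \in S$ and $i \in I$, as required.

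The only delicate point is making sure that the simultaneous correction needed to land in $G^1_R$ does not disturb the $\ell$-primary data one wants to preserve. Using the \emph{same} anchor point $X$ for all $\ell$ resolves this: differences $T_\ell - X$ all live in the subgroup $G^0_R$, so any combination of their $\ell$-primary parts is still in $G^0_R$, and hence $X$ plus such a combination stays in $G^1_R$. This is what makes the straightforward ``take the $\ell$-primary part of each $T_\ell$'' construction succeed.
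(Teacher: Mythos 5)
Your proof is correct and follows essentially the same route as the paper: your $Q_\ell^{(\ell)}$ is exactly the paper's $Z_\ell$ (the $\ell$-primary component of $T_\ell - X$ inside $G^0_R$), and your $T = X + \sum_{\ell\in S} Q_\ell^{(\ell)}$ is precisely the point the paper constructs. The verification that the $\ell$-primary part of $T$ agrees with that of $T_\ell$ is the same computation, just written out in slightly more detail.
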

\begin{proof}
For every $\ell\in S$, the torsion point $T_\ell-X$ belongs to $G^0_R(\bar{K})$. Then we can write $T_\ell-X=Z_\ell+Z'_\ell$, where $Z_\ell$ is a point in  $G^0_R[\ell^{\infty}]$ and $Z'_\ell$ is a torsion point in $G^0_R(\bar{K})$ of order coprime to $\ell$.
Define $T=\sum_\ell Z_\ell + X$. The point $T$ is a torsion point in $G^1_R(\bar{K})$. 
For every $\ell\in S$ and for every $i\in I$ we have:
$$\lord(T_{i})=\lord(\sum_\ell Z_{\ell i} + X_i)=\lord(Z_{\ell i} + X_i)=\lord(Z_{\ell i}+Z'_{\ell i} + X_i)=\lord(T_{\ell i})=a_{\ell i}$$ 
\end{proof}

\begin{proof}[Proof of Theorem~\ref{exmain}]
The existence of the density for $\Gamma$ was proven in Corollary~\ref{exmainexistence}.

Call $m$ the product of the elements of $S$.
Let $L$ be a finite Galois extension of $K$ where the points in $G_i[\ell^{a_{\ell i}}]$ are defined for every $\ell\in S$ and for every $i\in I$.
We may assume (see Lemma~\ref{torsioniso}) that for every prime $\mathfrak q$ of $L$ the reduction modulo $\mathfrak q$ gives a bijection from $G_i[\ell^{a_{\ell i}}]$ to $(G_i[\ell^{a_{\ell i}}] \bmod \mathfrak q)$, for every $\ell\in S$ and for every $i\in I$.

Let $\mathcal T$ be the set consisting of the points $Y=(Y_1,\ldots, Y_n)$ in $G[m^\infty]$ such that $\lord(Y_i)=a_{\ell i}$ for every $\ell\in S$ and for every $i\in I$. Notice that $\mathcal T$ is contained in $G[m^\infty](L)$ and it is $\Gal(\bar{K}/K)$-stable.
A prime $\mathfrak p$ of $K$ belongs to $\Gamma$ if and only if for every prime $\mathfrak q$ of $L$ over $\mathfrak p$ the following holds: for some $Y\in \mathcal T$ $\lord(R-Y \bmod \mathfrak q)=0$ for every $\ell\in S$.
Apply Theorem~\ref{tuttolemma} to $R$ and $\mathcal T$. We deduce that the set $\Gamma$ is infinite if and only if it has a positive density. We also deduce that $\Gamma$ is infinite if and only if $\mathcal T$ contains a point $T=(T_1,\ldots, T_n)$ with the following property: we can write $T=\sum_\ell T_\ell$ where for every $\ell\in S$ the point $T_\ell$ is in $G^j_R[\ell^{\infty}](L)$ for some $j\equiv  1 \!\!\pmod{n_{R,\ell}}$. Notice that $\mathcal T$ contains such an element if and only if condition (i) is satisfied.

Suppose again that $\Gamma$ is infinite. We show that condition (ii) is satisfied. Without loss of generality, fix $\ell\in S$. Because of condition (i) there exists $T_\ell=(T_{\ell 1},\ldots, T_{\ell n})$  such that $\lord(T_{\ell i})=a_{\ell i}$ for every $i\in I$ in $G^j_R[\ell^\infty](L)$ for some $j\equiv  1 \!\!\pmod{n_{R,\ell}}$. Let $X$ be as in section~\ref{preliminaries} and notice that the order of $(j-1)X$ is coprime to $\ell$. Since $G^j_R(\bar{K})=G^1_R(\bar{K})+(j-1)X$ we deduce that $T_\ell -(j-1)X$ is in $G^1_R(\bar{K})$ and satisfies the properties of condition (ii).

Viceversa, suppose that condition (ii) is satisfied. By Lemma~\ref{puntopunti}, there exists a torsion point $T=(T_{1},\ldots, T_{n})$ in $G^1_R(\bar{K})$ such that $\lord(T_{i})=a_{\ell i}$ for every $i\in I$ and for every $\ell\in S$. In particular, the point $R-T$ belongs to $G^0_R(\bar{K})$. Furthermore, $G^0_{R-T}=G^0_R$ since $R$ and $R-T$ have a common multiple. We deduce that $G_{R-T}$ is connected.

 Let $F$ be a finite Galois extension of $K$ where $T$ is defined. By applying \cite[Theorem 7]{Peruccaorder} to the point $R-T$, we find infinitely many primes $\mathfrak p$ of $K$ such that for some prime $\mathfrak w$ of $F$ over $\mathfrak p$ it is $\lord(R-T \bmod \mathfrak w)=0$ for every $\ell\in S$.
 
Up to excluding finitely many primes $\mathfrak p$, we may assume that the order of $(T_i \bmod \mathfrak w)$ equals the order of $T_i$ for every $i\in I$.
 
Then such primes $\mathfrak p$ belong to $\Gamma$ since for every $\ell\in S$ and for every $i\in I$ it is
$$\lord(R_i \bmod \mathfrak p)=\lord(R_i \bmod \mathfrak w)=\lord(T_i \bmod \mathfrak w)=\lord T_i=a_{\ell i}$$
\end{proof}

Suppose that in Theorem~\ref{exmain} every $G_i$ and every $R_i$ is non-zero. Then the condition $G_R=G$ implies that for every choice of the parameters $a_{\ell i}$ the set $\Gamma$ is infinite. The condition $G_R=G$ is equivalent to saying that $R$ generates a free $\End_K G$-submodule of $G(K)$, see \cite[Remark 6]{Peruccaorder}.
The following example shows that the set $\Gamma$ may be infinite for every choice of the parameters even if  $G_R\neq G$.

\begin{exa}
Let $E$ be an elliptic curve over $\mathbb Q$ without complex multiplication and such that $E(\mathbb Q)$ contains three points $P_1$, $P_2$ and $P_3$ which are $\mathbb Z$-linearly independent. For example consider the curve $[0,0,1,-7,6]$ of \cite{Cremona}.
Let $I=\{1,2\}$ and let $S=\{\ell\}$. Let $G_1=G_2=E^2$. Consider the points $R_1=(P_1,P_3)$ and $R_2=(P_2,P_3)$. Let $a_1$ and $a_2$ be non-negative integers. There exist infinitely many primes $\mathfrak p$ such that $\lord(R_i \bmod \mathfrak p)=a_i$ for $i=1,2$. Indeed, the point $(P_1,P_2,P_3)$ is independent in $E^3$ so we can apply \cite[Proposition 12]{Peruccaorder}. Thus we find infinitely many $\mathfrak p$ such that $\lord(P_i \bmod \mathfrak p)=a_i$ for $i=1,2$ and $\lord(P_3 \bmod \mathfrak p)=0$.
\end{exa}

\begin{rem}
Suppose that the number of connected components of $G_R$ is coprime to $\ell$. Then in condition (ii) of Theorem~\ref{exmain} it suffices to require that $T_\ell$ is in $G_R$ and not necessarily in $G^1_R$.
In general, it suffices to require that  $T_\ell$ is in $G^b_R$ for some $b$ coprime to $\ell$.
\end{rem}
\begin{proof}
Let $X$ be as in section~\ref{preliminaries}. If the number of connected components of $G_R$ is coprime to $\ell$ then the order of $X$ is coprime to $\ell$. Then by summing to $T_\ell$ a multiple of $X$ we may assume that $T_\ell$ is in $G^1_R$.
For the second assertion, notice that $G^b_R=G^1_{bR}$. So by applying Theorem~\ref{exmain} to the point $bR$ we find infinitely many primes $\mathfrak p$ of $K$ such that for every $i\in I$ and for every $\ell\in S$ it is
$$\lord(R_i \bmod \mathfrak p)=\lord(bR_i \bmod \mathfrak p)=a_{\ell i}$$
We deduce that the set $\Gamma$ is infinite. 
\end{proof}

\begin{rem}\label{remgammal}
With the notations of Theorem~\ref{exmain}, for every $\ell\in S$ define the following set: 
$$\Gamma_\ell=\{\mathfrak p \in K :\;\; \forall i\in I\;\; \lord(R_i \bmod \mathfrak p)=a_{\ell i}\}$$
We have  $\Gamma=\cap_\ell \Gamma_\ell$ and $\Gamma$ is an infinite set if and only if $\Gamma_\ell$ is an infinite set for every $\ell\in S$.
\end{rem}
\begin{proof}
In Theorem~\ref{exmain}, condition (ii) is a collection of conditions for every $\ell\in S$.
\end{proof}

For one point of infinite order we have:

\begin{cor}\label{onepoint}
Let $G$ be the product of an abelian variety and a torus defined over a number field $K$. Let $R$ be a point in $G(K)$ of infinite order. Let $S$ be a finite set of rational primes. For every $\ell\in S$ let $a_\ell$ be a non-negative integer.
Consider the following set of primes of $K$:
$$\Gamma=\{\mathfrak p :\; \forall \ell\in S\;\; \lord(R \bmod \mathfrak p)=a_{\ell}\}$$
The set $\Gamma$ is either finite or it has a positive natural density.
Let $G_R$ be the smallest $K$-algebraic subgroup of $G$ containing $R$ and call $n_R$ the number of connected components of $G_R$. Then $\Gamma$ is infinite if and only if for every $\ell$ in $S$ it is $a_\ell\geq v_\ell(n_R)$. Furthermore, $n_R$ is the greatest positive integer dividing the order of $(R \bmod \mathfrak p)$ for all but finitely many primes $\mathfrak p$ of $K$.
\end{cor}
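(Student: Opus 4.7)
The plan is to specialize Theorem~\ref{exmain} to the single-point case $n=1$, $G_1=G$, $R_1=R$. That theorem immediately supplies the dichotomy between finiteness and positive natural density, together with the criterion for infiniteness, namely condition (ii): for every $\ell\in S$ there exists a torsion point $T_\ell$ in $G^1_R(\bar K)$ with $\lord(T_\ell)=a_\ell$. The remaining task is to translate this into the concrete inequality $a_\ell\geq v_\ell(n_R)$ for all $\ell\in S$.

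For the translation I would invoke the description $G^1_R(\bar K)=X+G^0_R(\bar K)$ from Section~\ref{preliminaries}, where $X$ is a torsion point of order exactly $n_R$ whose image generates the cyclic component group $G_R/G^0_R$. Any torsion point of $G^1_R(\bar K)$ has the form $X+W$ with $W\in G^0_R(\bar K)$ of finite order, and the valuation $\lord(X+W)$ is controlled by the $\ell$-primary parts $X_\ell+W_\ell$. Since $W_\ell$ lies in $G^0_R$ while $X_\ell$ projects onto a generator of the $\ell$-part of the component group (of order $\ell^{v_\ell(n_R)}$), the sum $X_\ell+W_\ell$ still projects onto that generator, so its order is divisible by $\ell^{v_\ell(n_R)}$; this rules out $a_\ell<v_\ell(n_R)$. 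Conversely, for $a_\ell=v_\ell(n_R)$ I take $W=0$, giving $\lord(T_\ell)=\lord(X)=v_\ell(n_R)$; for $a_\ell>v_\ell(n_R)$ I add to $X$ a point of order exactly $\ell^{a_\ell}$ in $G^0_R[\ell^\infty]$, which is available because $G^0_R$ has positive dimension (as $R$ has infinite order). In both cases the resulting $T_\ell$ realizes $a_\ell$.

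The final sentence of the corollary splits in two. That $n_R$ divides $\ord(R\bmod\mathfrak p)$ for all but finitely many $\mathfrak p$ is the Main Theorem of \cite{Peruccaorder} recalled in the introduction. Maximality then follows from the density statement just proved: if a positive integer $N$ divides $\ord(R\bmod\mathfrak p)$ for almost all $\mathfrak p$, then for each prime $\ell$ the choice $a_\ell=v_\ell(n_R)$ produces infinitely many $\mathfrak p$ with $\lord(R\bmod\mathfrak p)=v_\ell(n_R)$, forcing $v_\ell(N)\leq v_\ell(n_R)$ for every $\ell$, hence $N\mid n_R$. I do not anticipate a genuine obstacle, since the substantive analytic work is already in Theorem~\ref{exmain} and the leftover argument is a short component-group computation.
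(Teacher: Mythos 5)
Your proposal is correct, and the component-group computation goes through: any torsion point of $G^1_R(\bar K)=X+G^0_R(\bar K)$ surjects onto the generator of the cyclic component group, so its order is divisible by $n_R$ and in particular $\lord\geq v_\ell(n_R)$; conversely $X$ plus a point of exact order $\ell^{a_\ell}$ in $G^0_R[\ell^\infty]$ (available since $G^0_R$ has positive dimension, as $R$ has infinite order) realizes any $a_\ell\geq v_\ell(n_R)$. The route is, however, not the one the paper takes: the paper's proof of Corollary~\ref{onepoint} is a direct citation of the Main Theorem of \cite{Peruccaorder} (which already gives the criterion $a_\ell\geq v_\ell(n_R)$ and the "greatest divisor" statement for a single point) together with Corollary~\ref{exmainexistence} for the existence of the density, whereas you specialize the full Theorem~\ref{exmain} to $n=1$ and then translate condition (ii) into the valuation inequality by hand. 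Your version is more self-contained within this paper — it replaces an appeal to the predecessor paper's criterion by the short computation that $G^1_R(\bar K)$ contains a torsion point of order $n$ exactly when $n_R\mid n$, which is precisely the observation the author records (without proof) in the closing remark after Corollary~\ref{onepoint} — at the cost of invoking the heavier Theorem~\ref{exmain}; the paper's version is shorter but leans entirely on \cite{Peruccaorder}. Your re-derivation of the maximality of $n_R$ from the density statement is also fine, though strictly speaking that sentence is already the Main Theorem of \cite{Peruccaorder} verbatim, so no new argument is needed there.
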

\begin{proof}
The assertions are consequences of \cite[Main Theorem]{Peruccaorder} and Corollary~\ref{exmainexistence}. 
\end{proof}

Notice that $G^1_R(\bar{K})$ contains a torsion point of order $n$ if and only if $n$ is a multiple of $n_R$.  This follows from the fact that $G^1_R(\bar{K})= X+G^0_R(\bar{K})$, where $X$ is as in Section~\ref{preliminaries}.

\section*{Acknowledgements}I thank Rafe Jones and Jeremy Rouse for helpful discussions. I thank Peter Jossen, Emmanuel Kowalski and Dino Lorenzini for useful comments.

%\bibliographystyle{amsalpha}
%\bibliography{bibliography.bib}

\vspace{0.5cm}

Antonella Perucca

EPFL Station 8, CH-1015, Lausanne, Switzerland  
\vspace{0.2cm}

antonella.perucca@epfl.ch
\end{document}